\theoremstyle{definition} 
 \newtheorem{definition}{Definition}[section]
\theoremstyle{plain}      
 \newtheorem{proposition}[definition]{Proposition}
 \newtheorem{theorem}[definition]{Theorem}
 \newtheorem{lemma}[definition]{Lemma}
\def\sb{\mathbb{S}}
\def\tb{\mathbb{T}}
\begin{document}

\title{Beta-conjugates of real algebraic numbers 
as Puiseux expansions}

\author{
Jean-Louis Verger-Gaugry~
}

\address{
}

\maketitle

\begin{abstract}
The beta-conjugates of a base of numeration $\beta > 1$, $\beta$ being 
a Parry number,
were introduced by Boyd, in the context of the R\'enyi-Parry dynamics of
numeration system and the beta-transformation. 
These beta-conjugates are canonically associated with
$\beta$.
Let $\beta > 1$ be a real algebraic number. A
more general definition of the beta-conjugates of 
$\beta$ is introduced in terms of
the Parry Upper function $f_{\beta}(z)$ of the beta-transformation.
We introduce 
the concept of a germ of curve at 
$(0,1/\beta) \in \mathbb{C}^{2}$
associated 
with $f_{\beta}(z)$
and
the reciprocal of the
minimal polynomial of
$\beta$.
This germ 
is decomposed into irreducible elements 
according to the theory of Puiseux, gathered into 
conjugacy classes.
The beta-conjugates of $\beta$, in terms of the
Puiseux expansions, are given a new equivalent 
definition in this new context.
If $\beta$ is a Parry number the (Artin-Mazur) dynamical 
zeta function
$\zeta_{\beta}(z)$ of the beta-transformation, simply related
to $f_{\beta}(z)$, 
is  
expressed as a product formula, under some assumptions,
a sort of analog to the 
Euler product of the Riemann zeta function,
and the factorization of the
Parry polynomial of $\beta$ is deduced 
from the germ.
\end{abstract}

\begin{classification}
11M99, 30B10, 12Y05.
\end{classification}

\begin{keywords}
R\'enyi, Parry number, numeration system, Parry polynomial, algebraic integer, dynamics,
beta-conjugate, germ of curve, Puiseux series, dynamical zeta function, 
factorization.
\end{keywords}

\newpage
\tableofcontents  

\section{Introduction}
\label{S1}

For $\beta > 1$ a Parry number, Boyd \cite{boyd2} introduced
the notion of the beta-conjugates of $\beta$ 
in the context of the R\'enyi - Parry
numeration system
\cite{renyi} \cite{parry} \cite{blanchard} \cite{frougny1}.
As he has shown it in numerous examples, the
investigation of beta-conjugates 
is an important question. 
These beta-conjugates, 
up till now defined for Parry numbers,
are canonically associated to $\beta$ and to the dynamics of
the beta-transformation.
Our aim is to show that their definition 
can be given in a larger context, 
namely for any algebraic number $\beta > 1$,
and that
the theory of Puiseux provides a geometric origin
to the beta-conjugates
of $\beta$; for doing it, once
$\beta$ is given by its minimal polynomial, we
first put into evidence that 
a germ of curve ``at $1/\beta$" does exist and
develop new tools deduced from the canonical decomposition of this germ 
in order to express the beta-conjugates of $\beta$
in terms of the Puiseux expansions
\cite{puiseux} \cite{casasalvero} of the germ.

Though the existence of this germ of curve 
was discovered by the author some years ago, the present note 
is the first account on it 
and its potential applications. It establishes a deep relation between
the theory of singularities of curves in Algebraic Geometry and
the dynamical system of numeration $([0,1], T_{\beta})$ where
$\beta > 1$ is an algebraic number and 
$T_{\beta}$ is the beta-transformation. 
The existence of this germ of curve 
brings new tools to the 
R\'enyi-Parry numeration system, namely the Puiseux series
associated to the germ, and 
defines new directions of research
for old questions. For instance, if $(\beta_i)$ is a sequence
of Salem numbers which converges to a real number $\beta$, then
it is known \cite{bertinetal} that 
$\beta$ is a Pisot or a Salem number,
but how is distributed the collection of the
beta-conjugates and the Galois conjugates of
$\beta_i$, with $i$ large enough,
with respect to that of the limit $\beta$ ? 
This question is merely a generalization of
the classical question of how is distributed the 
collection of the Galois conjugates
of $\beta_i$ with respect to that of $\beta$ ?
Why should we add the beta-conjugates ?
Because a new phenomenon appears which generally does not exist
with only the Galois conjugates:
under some assumptions 
the collections of Galois- and beta- conjugates
may have equidistribution limit properties on the unit circle
(\S 3.6 in \cite{vergergaugry2}) if
the two collections of
conjugates are simultaneously considered. 
Both collections of conjugates
are expected to play a role in limit and dynamical properties of
convergent sequences of real algebraic numbers $> 1$
in general.
A basic question is then to
understand the role and the relative density of the
beta-conjugates 
in this possible equidistribution process, in particular if
the limit $\beta$ is an integer $\geq 2$ or is
equal to $1$ (context of the Conjecture of Lehmer). 

Conversely the curve 
canonically associated with this numeration dynamical system 
is of interest for itself (critical points, monodromy, ...). 
It will be studied elsewhere.

In this first contribution we
obtain useful expressions for the beta-conjugates as Puiseux 
expansions of $\beta$ and of 
the minimal polynomial of $\beta$, towards this goal.

As usual now we use the new terminology, which is 
in honor of W. Parry.
The old terminology used by W. Parry himself in \cite{parry}
transforms as follows:
we now call {\it Parry number} a $\beta$-number \cite{parry}, and 
{\it Parry polynomial} of a Parry number $\beta$ the 
characteristic polynomial \cite{parry} of the $\beta$-number
$\beta$. As previously a {\it simple} Parry number $\beta$ is 
a Parry number $\beta$ for which the R\'enyi $\beta$-expansion 
$d_{\beta}(1)$ of unity is finite 
(i.e. ends in infinitely many zeros). The exact definitions 
are given in Section \ref{S3}.

If $\beta$ is a Parry number, 
the roots of the Parry polynomial of $\beta$, denoted by
$\beta^{(i)}$, 
are called
the conjugates of $\beta$. A conjugate of $\beta$
is either a Galois conjugate of $\beta$ or a beta-conjugate, 
if the collection
of beta-conjugates of $\beta$ is not empty.

Let $\beta > 1$ be a real number and $d_{\beta}(1)= 0. t_1 t_2 t_3 \ldots$
be the R\'enyi $\beta$-expansion of 1.  Since this 
R\'enyi $\beta$-expansion
of 1 controls the language in base
$\beta$ \cite{lothaire}, the properties of the analytic function
constructed from it, called Parry Upper function 
at $\beta$, defined by $f_{\beta}(z) := -1 + \sum_{i \geq 1} t_i z^i$, 
is of particular importance.

Ito and Takahashi \cite{itotakahashi} have shown that the
Parry Upper function at a Parry number $\beta$, 
of the complex variable $z$, is related
to the (Artin-Mazur) dynamical zeta function 
\begin{equation}
\label{zeta00}
\zeta_{\beta}(z) ~:=~ \exp \left(
\sum_{i \geq 1} \frac{\#\{x \in [0, 1] 
\mid T_{\beta}^n(x) = x\}}{n} z^n\right)
\end{equation}
of the beta-transformation $T_{\beta}$
(Artin and Mazur \cite{artinmazur}, Boyd \cite{boyd2}, Flatto, Lagarias and Poonen
\cite{flattolagariaspoonen},
Verger-Gaugry \cite{vergergaugry1}
\cite{vergergaugry2}). 
Namely, if $\beta$
is a nonsimple Parry number, with 
$d_{\beta}(1) = 0 . t_1 t_2 \ldots t_m (t_{m+1} \ldots t_{m+p+1})^{\omega}$
(where $(\,)^{\omega}$ means infinitely repeated),
\begin{equation}
\label{fzetaNONsimple}
f_{\beta}(z) ~=~ - \frac{1}{\zeta_{\beta}(z)} ~=~ 
- \frac{P_{\beta,P}^{*}(z)}{1-z^{p+1}}
\end{equation}
where $P_{\beta,P}^{*}(X) 
= (-1)^{d_P} \left(\prod_{i=1}^{d_P} \beta^{(i)}\right) 
\, \times \, \prod_{i=1}^{d_P} (X- \frac{1}{\beta^{(i)}})
= X^{d_P} P_{\beta,P}(1/X)$
is the reciprocal of the Parry polynomial 
$P_{\beta,P}(X)$ of $\beta$,
of degree $d_P = m + p + 1$ ($m$ is the preperiod length and
$p+1$ is the period length in $d_{\beta}(1)$, if $\beta$ is a nonsimple Parry number,
with the convention
$p+1=0$ for a finite R\'enyi $\beta$-expansion of unity 
(for $\beta$ a simple Parry number), with 
the convention $m=0$ if 
$d_{\beta}(1)$ is a purely periodic expansion
\cite{vergergaugry2});
if $\beta$ is a simple Parry number, with
$d_{\beta}(1) = 0 . t_1 t_2 \ldots t_m$, then
\begin{equation}
\label{fzetaSIMPLE}
f_{\beta}(z) ~=~ - \frac{1-z^m}{\zeta_{\beta}(z)} ~=~ 
- P_{\beta,P}^{*}(z).
\end{equation}
The zeros of $f_{\beta}(z)$ are 
the poles of $\zeta_{\beta}(z)$. 
The set of zeros of 
$f_{\beta}(z)$ is the set $(1/\beta^{(i)})_i$
of the reciprocals of the
conjugates $(\beta^{(i)})_i$
of $\beta$. The geometry of the conjugates 
$(\beta^{(i)})_i$ of $\beta$ was carefully studied
by Solomyak \cite{solomyak} \cite{vergergaugry1}: 
these conjugates all lie in
Solomyak's fractal $\Omega$, a compact connected subset of the closed disc
$\overline{D}(0, \frac{1+\sqrt{5}}{2})$ in the complex plane
(Figure \ref{fractalSOLO}), 
having a cusp at $z=1$, a spike
on the negative real axis, 
symmetrical with respect to the real line \cite{solomyak}
\cite{vergergaugry2}.

If $\beta > 1$ is an algebraic number but 
not a Parry number, some relations are
expected between $f_{\beta}(z)$ and $\zeta_{\beta}(z)$, 
though not yet determined. 
Indeed, on one hand, $f_{\beta}(z)$ is an
analytic function on the open unit disc
which admits
$|z|=1$ as natural boundary by 
Szeg\H{o}-Carlson-Poly\'a's Theorem \cite{dienes} \cite{vergergaugry2};
$f_{\beta}(z)$ 
admits $1/\beta$ as zero of multiplicity one, 
which is its only zero in the interval
$(0,1)$. On the other hand $\zeta_{\beta}(z)$ 
is an analytic function defined on
the open unit disc $D(0,1/\beta)$, 
which admits a nonzero meromorphic continuation 
on $D(0,1)$, by \cite{haydn} \cite{parrypollicott} \cite{ruelle},
or by Baladi-Keller's Theorem 2 in
\cite{baladikeller}.
Whether the zeros of $f_{\beta}(z)$ correspond to poles of
$\zeta_{\beta}(z)$ is unknown.
The behaviour of the dynamical zeta function
$\zeta_{\beta}(z)$ on the unit circle remains unknown, 
i.e. we do not know  
whether 
$|z|=1$ is a natural boundary for
$\zeta_{\beta}(z)$ or not. But the multiplicity
of the pole $1/\beta$ of $\zeta_{\beta}(z)$ 
is known to be one 
\cite{haydn} \cite{parrypollicott} \cite{ruelle}. 
For $\beta > 1$ an algebraic number,
as a consequence of Theorem 1 in \cite{baladikeller}, the coefficients
in \eqref{zeta00} obey the following asymptotics of growth
(Pollicott, \S 5.2 in \cite{pollicott}) :
for any $\delta > 0$ there exist an integer $M > 0$ and 
constants $(i) ~\lambda_{1,\beta}, \lambda_{2,\beta}, \ldots, \lambda_{M,\beta}$,
with $~|\lambda_{i,\beta}| > 1 + \delta ~(i = 1, \ldots, M)$, and
$(ii)~ C_{1,\beta}, C_{2,\beta}, \ldots, C_{M,\beta} ~\in \mathbb{C}$, such that
\begin{equation}
\label{nombrePointsFixes}
\#\{x \in [0, 1] \mid T_{\beta}^n(x) = x\} ~=~ 
\sum_{i=1}^{M} C_{i,\beta} \lambda_{i,\beta}^{n} + O((1+\delta)^n).
\end{equation}
In the case where $\beta > 1$ is a Parry number, 
$\zeta_{\beta}(z)$ is a rational fraction
and, from \eqref{fzetaNONsimple} and \eqref{fzetaSIMPLE}, 
\eqref{nombrePointsFixes}
transforms into the following exact formula 
(after Pollicott, \S 1 in \cite{pollicott}):
\begin{equation}
\label{nombrePointsFixesRATIONAL}
\#\{x \in [0, 1] \mid T_{\beta}^n(x) = x\} ~=~
\sum_{i=1}^{k} \, (\rho_i)^n - \sum_{i=1}^{d_P} \,(\beta^{(i)})^n,
\end{equation}
where $(\rho_i)_i$ is the collection of $k$-th roots of unity,
$(k, d_P) = (p+1, m+p+1)$ if $\beta$ is nonsimple with
$d_{\beta}(1) = 0 . t_1 t_2 \ldots t_m (t_{m+1} \ldots t_{m+p+1})^{\omega}$
and $(k, d_P) = (m, m)$ 
if $\beta$
is simple with $d_{\beta}(1)$ of 
length $m$ 
(i.e. $d_{\beta}(1) = 0 . t_1 t_2 \ldots t_m$).
Moreover, $\beta$ is a Perron number since it is 
a Parry number (Lind, \cite{lothaire}):  
hence the asymptotic growth of 
\eqref{nombrePointsFixesRATIONAL} is 
dictated by the geometry and the moduli of the 
beta-conjugates of $\beta$, all
being algebraic integers lying in Solomyak's fractal 
$\Omega$, of modulus
less than or equal to $(1+\sqrt{5})/2$, and by the geometry 
and the moduli of
the Galois conjugates of $\beta$, all being less than
$\beta$, by definition.

Our objective consists in showing that a germ
of curve exists in a neighbourhood of 
the point $(0, 1/\beta)$ in $\mathbb{C}^2$ (this point being the 
origin of this germ)
each time $\beta > 1$ is a real algebraic number,
that is, roughly speaking, a germ of curve located at the
reciprocal $1/\beta$
of the base of numeration
$\beta$. The construction 
of this germ of curve comes from
a (unique) writting of the one-variable
analytic function $f_{\beta}(z)$ 
as a (unique) two-variable analytic function
parametrized by 
$P_{\beta}^{*}(z)$ and $z-1/\beta$, where
$P_{\beta}^{*}(X) = X^{{\rm deg} \beta} P_{\beta}(1/X)$ is the reciprocal
of the minimal polynomial $P_{\beta}(X)$ of $\beta$:
\begin{equation}
\label{rewrittingFBETA} 
f_{\beta}(z) ~=~ G\Bigl(P_{\beta}^{*}(z), z-1/\beta\Bigr),
\end{equation}
where $G = G_{\beta}(U,Z) \in 
\mathbb{C}[[U]][Z]$, 
deg$_{Z}(G_{\beta}(U,Z)) <~ $deg $\beta$,
is convergent,
with coefficients
in $\mathbb{C}$, possibly in some cases in
the algebraic number field 
$\mathbb{K}_{\beta} := \mathbb{Q}(\beta)$, or in a finite 
algebraic extension
of $\mathbb{K}_{\beta}$.

The existence of this germ of curve
arises from the fact that
$\beta > 1$ is a real number which is an algebraic number, since
it is constructed from 
the imposed parametrization $(P_{\beta}^{*}(z), z-1/\beta)$,
which makes use of the
minimal polynomial of $\beta$.
This parametrization
of $G_{\beta}(U,Z)$
leads to the identity  \eqref{rewrittingFBETA}.

Applying the theory of Puiseux \cite{casasalvero} 
\cite{duval}
to \eqref{rewrittingFBETA} 
provides a canonical decomposition of this germ 
into irreducible curves, conjugacy classes, 
as stated in Theorem \ref{puiseuxFBETA}. 
This decomposition brings to light 
several new features of the 
Parry Upper function $f_{\beta}(z)$: 

(i) a new definition of 
the beta-conjugates of $\beta$ in terms
of the Puiseux expansions of the germ
(Definition \ref{BETACONJUGATEpuiseuxFBETAnewdef}), 

(ii) the explicit relations between the 
field of coefficients of the Puiseux series of the
germ $G_{\beta}$, and the beta-conjugates,

(iii) a product formula, as given by \eqref{decccz};
in particular, if
$\beta$ is a Parry number, from
\eqref{fzetaNONsimple} and \eqref{fzetaSIMPLE},
this product gives
an analog of the Euler product
of the Riemann zeta function
for the dynamical zeta function
$\zeta_{\beta}(z)$, where the product is taken over
the different rational
conjugacy classes of the germ (as given by
\eqref{zetaPRODUCT++}).

In addition to the usual Galois conjugation 
relating the roots of the minimal polynomial of
$\beta$, a new conjugation relation, called ``Puiseux-conjugation",
among the beta-conjugates, is defined.

The reader accustomed to numeration systems 
and to the theory of Puiseux
for germs of curves 
can skip Section \ref{S3} and Section \ref{S4} to
proceed directly to beta-conjugates in Section \ref{S5}.

\section{Origin of the work}
\label{S2}

The present note finds its origin in \cite{salem},
for the parametrization by $(P^{*}_{\beta}(z), z-\frac{1}{\beta})$,
and in the two articles
\cite{boyd1} \cite{bertinboyd},
for the idea of developping a two-variable analytic function
canonically associated with the beta-transformation and
the minimal polynomial of the base of numeration $\beta$.
Let us recall them.

In Theorem IV in \cite{salem}, for constructing 
convergent families of
Salem numbers $(\tau_m)_m$ for which the limit is a 
(nonquadratic) Pisot number $\theta$, Salem introduces
polynomials of the following type
\begin{equation}
\label{salemPOLY}
Q_{m}(z) = z^m P_{\theta}(z) + P^{*}_{\theta}(z)
\quad \mbox{or}\quad
Q_{m}(z) = \left(z^m P_{\theta}(z) - P^{*}_{\theta}(z)\right)/(z-1)
\end{equation}
where $Q_{m}(\tau_m) = 0$ and
$P_{\theta}(X)$ is the minimal polynomial of the limit 
$\theta$. We may consider $Q_{m}(z)$ in 
\eqref{salemPOLY}, in one or the other form, as parametrized
by the couple $(P^{*}_{\theta}(z), z)$ (ordered pair). 
This
parametrization, and its consequences, were developped and extended
by Boyd \cite{boyd1} to a more general form, by adding ingeniously
and in a ``profitable" way a second variable $t$, as follows
$$Q(z,t) = z^n P_{\theta}(z) \pm \,t \, z^k \, P^{*}_{\theta}(z)$$
with $n, k$ integers.    
The advantage
of introducing a second variable $t$, as ``continuous parameter",
lies in the fact that an algebraic curve $z = Z(t)$ is associated to
$Q(z,t) = 0$, with a finite number 
of branches and multiple points \cite{boyd3}. 
Boyd \cite{boyd1} shows that the 
existence of this curve gives a deep insight
into the geometry of the roots of $Q(z,t)=0$, for some
values of $t$, in particular those
roots on the unit circle. 
Using these polynomials Bertin and Boyd \cite{bertinboyd} explore
the interlacing of the Galois conjugates of Salem numbers 
with the roots of associated polynomials
(Theorem A and Theorem B in \cite{bertinboyd}).

\section{Functions of the R\'enyi-Parry numeration system in base $\beta > 1$}
\label{S3}
A Salem number is an algebraic integer $> 1$
for which all the Galois conjugates lie
in the closed unit disc,
with at least one conjugate on the unit circle;
the degree of a Salem number 
is even, greater than $4$, and its minimal polynomial is reciprocal
(a Salem number is Galois-conjugated to its inverse) \cite{bertinetal}.
A Perron number is either 1 or
an algebraic integer $\beta > 1$
such that all its Galois conjugates
$\beta^{(i)}$ satisfy:
$|\beta^{(i)}| < \beta$ for $i=1, 2, \ldots, {\rm deg}(\beta) - 1$, if
the degree of $\beta$ is denoted by
deg$(\beta)$ (with $\beta^{(0)} = \beta$).
A Pisot number $\beta$
is a Perron number $\neq 1$ which has the property:
$|\beta^{(i)}| < 1$ for $i = 1, 2, \ldots, {\rm deg}(\beta) - 1$
(with $\beta^{(0)} = \beta$).

Let $\beta > 1$ be a real number and 
define the beta-transformation
$T_{\beta} : [0,1] \to [0,1], x \to \{\beta x\}$ 
($\lceil x \rceil$, resp. $\lfloor x \rfloor$, 
denotes the closest integer to the real number $x$,
$\geq x$, resp. $\leq x$, and
$\{x\}$ its fractional part). 
Denote
$T_{\beta}^{0} =\, $Id, 
$T_{\beta}^{j} = T_{\beta}(T_{\beta}^{j-1})$,
and $t_j = t_{j}(\beta) := 
\lfloor \beta \,T_{\beta}^{j-1}(1) \rfloor$, $j \geq 1$
(the dependency of each $t_j$ to $\beta$ will not be
indicated in the sequel). The digits $t_j$
belong to the finite alphabet
$\mathcal{A}_{\beta} = \{0, 1, \ldots, \lceil \beta - 1 \rceil \}$.
The R\'enyi $\beta$-expansion of unity is denoted by
\begin{equation}
\label{rereDBETA}
d_{\beta}(1) = 0 . t_1 t_2 t_3 \ldots
\qquad \mbox{ and corresponds to }
\qquad 
1 = \sum_{j \geq 1} t_j \beta^{-j}
\end{equation}
obtained by the Greedy algorithm applied to 
$1$ by the successive negative powers
of $\beta$.
The set of successive iterates of $1$ under
$T_{\beta}$, hence the sequence $(t_i)_{i \geq 1}$, 
has the important property that
it controls the admissibility of finite and 
infinite words written in base
$\beta$ over the alphabet $\mathcal{A}_{\beta}$, 
that is the language in base $\beta$,
by the so-called Conditions of Parry 
\cite{frougny1} \cite{lothaire} \cite{vergergaugry2}.

A Parry number $\beta$ is a real number $> 1$
for which the sequence of digits 
$(t_i)_{i \geq 1}$ in the 
R\'enyi $\beta$-expansion of unity
$d_{\beta}(1) = 0 . t_1 t_2 t_3 \ldots$
either ends in infinitely many zeros, in which case 
$d_{\beta}(1)$ is said to be finite and
$\beta$ is said a simple Parry number,
or 
is eventually periodic.
In the second case, if the preperiod length is zero, 
$d_{\beta}(1)$
is said to be purely periodic.
The set of Parry numbers is denoted by
$\mathbb{P}_P$.

Let $\overline{\mathbb{Q}}$ be the set of algebraic numbers.
Denote by $\tb$, resp. $\sb$, resp. $\mathbb{P}$,  the set of Salem numbers,
resp. Pisot numbers, resp. Perron numbers.
After Bertrand-Mathis \cite{bertrandmathis},
Schmidt \cite{schmidt}, Lind \cite{lothaire}, 
the following inclusions hold
$$\sb ~\subset~ \mathbb{P}_P ~\subset~ \mathbb{P} ~\subset~ 
\overline{\mathbb{Q}}.$$
The question of the dichotomy 
$\mathbb{P} = \mathbb{P}_P \, \cup \, (\mathbb{P} \setminus \mathbb{P}_P)$
is an important open question, which 
amounts to finding a method for
discrimating when a Perron number $> 1$ 
is a Parry number or not.
In particular, for Salem numbers, 
though conjectured to be nonempty with
a positive density \cite{boyd3},
the set $\tb \setminus \mathbb{P}_P$ 
is not charaterized yet. For now, it is a fact that 
all the small Salem numbers, 
for instance those given by Lehmer 
in \cite{lehmer}, and many others known, 
are Parry numbers \cite{boyd1} \cite{boyd3}.
The set of simple Parry numbers contains 
$\mathbb{N} \setminus \{0, 1\}$ and is dense in
$(1, +\infty)$ \cite{parry}.

Let $\beta$ be a Parry number, with
$d_{\beta}(1) = 0 . t_1 t_2 \ldots t_m (t_{m+1} \ldots t_{m+p+1})^{\omega}$.
If $ m \neq 0$, the integer $m$ is the preperiod length of $d_{\beta}(1)$;
if $p+1 \geq 1$, the period length of $d_{\beta}(1)$
is $p+1$. 
The iterates of $1$ under $T_{\beta}$
are polynomials: 
$T_{\beta}^{n}(1) = \beta^n - t_1 \beta^{n-1} -t_2 \beta^{n-2} \ldots -t_{n}$
(by induction). This observation allows Boyd
in \cite{boyd2} to define uniquely the Parry polynomial of $\beta$.
Indeed, writting $r_{n}(X)= X^n - t_1 X^{n-1} -t_2 X^{n-2} \ldots -t_{n}$, we have
$r_{n}(\beta) = T_{\beta}^{n}(1)$ and 
$\beta$ satisfies the polynomial equation $P_{\beta,P}(\beta)=0$, where
\begin{equation}
\label{defiParryPOL_BOYD} 
P_{\beta,P}(X) :=
\left\{
\begin{array}{ll}
r_{m+p+1}(X) - r_{m}(X) 
& \mbox{if}~ m > 0 \, ~(p+1 \geq 1),\\
r_{p+1}(X) - 1
& \mbox{if}~ m = 0 \, ~(p+1 \geq 1, \mbox{``purely periodic"}),\\
r_{m}(X)
& \mbox{if}~ m \geq 1 \, ~(p+1 = 0, \mbox{``simple"}).
\end{array}
\right.
\end{equation}
The Parry polynomial $P_{\beta,P}(X)$ of the Parry number
$\beta$, monic, of degree
$d_P = m+p+1$, multiple of the minimal polynomial
$P_{\beta}(X)$ of $\beta$, can also be defined
from the rational fraction $\zeta_{\beta}(X)$: 
its reciprocal $P_{\beta,P}^{*}(z)$, 
of the complex variable $z$,
is the denominator of the meromorphic function
$\zeta_{\beta}(z)$, given in both cases by 
\eqref{fzetaNONsimple} and 
\eqref{fzetaSIMPLE} (``simple" case).
Boyd \cite{boyd2} defines the beta-conjugates of $\beta$
as being the roots
of $P_{\beta,P}(X)$, canonically attached to $\beta$,
which are not the Galois conjugates of $\beta$.
Beta-conjugates are algebraic integers.

For any real number $\beta > 1$,
from the sequence $(t_i = t_{i}(\beta))_{i \geq 1}$ we form
the Parry Upper function $f_{\beta}(z) := -1 + \sum_{i \geq 1} t_i z^i$
at $\beta$, of the complex variable $z$.
The terminology ``Parry Upper" comes from
the fact that $(t_i)_{i \geq 1}$ gives the 
upper bound for admissible words in base $\beta$,
where being lexicographically smaller than this upper bound, with all its shifts,
means satisfying the Conditions of Parry
for admissibility \cite{frougny1} \cite{lothaire}
\cite{vergergaugry2}. 

When $\beta$ is a Parry number, 
the inverses $\xi^{-1}$ of the
zeros $\xi$ of
the analytic function
$f_{\beta}(z)$ are exactly the roots of the Parry polynomial 
$P_{\beta,P}(X)$ of $\beta$
(from \eqref{fzetaNONsimple}, \eqref{fzetaSIMPLE};
\cite{vergergaugry2}). 
In particular we have 
$f_{\beta}(1/\beta)=0$ by \eqref{rereDBETA}. 
The multiplicity of the root
$1/\beta$ in $f_{\beta}(z)$ is one by the fact that 
$f'_{\beta}(1/\beta) = \sum_{i \geq 1} i t_i \beta^{i-1} > 0$.
Hence in the factorization of $P_{\beta,P}(X)$ the multiplicity
of the minimal polynomial $P_{\beta}(X)$ of $\beta$ is one.
But the determination of 
the multiplicity of a beta-conjugate of $\beta$ and of the
factorization of the Parry polynomial of $\beta$
is an open problem \cite{boyd2} \cite{vergergaugry2}. 
We give
a partial solution to this problem
by showing how this factorization can be deduced from
the germ of curve ``at $1/\beta$" and the theory of Puiseux.

Though the degree $d_P$ of the Parry polynomial
$P_{\beta,P}(X)$ of a Parry number $\beta$
be somehow an obscure function of $\beta$, 
the Parry polynomial $P_{\beta,P}(X)$, say
$=  \sum_{i=0}^{d_P} a_i X^i$,
has the big advantage, as compared to the minimal polynomial
$P_{\beta}(X)$ of $\beta$, to exhibit
a naive height
H$(P_{\beta,P}) = \max_{i=0,1,\ldots,d_P} |a_i|$
in $\{\lfloor \beta \rfloor, \lceil \beta \rceil\}$ \cite{vergergaugry2}.
This control of the height by the base of numeration
$\beta$ has an important consequence: 
given a convergent family of Parry numbers
$(\beta_j)_j$, 
an Equidistribution Limit Theorem for 
the conjugates $(\beta^{(i)}_{j})_{i,j}$ holds
with a limit measure 
which is the Haar measure on the unit circle
\cite{vergergaugry2}, under some assumptions.
Solomyak's fractal $\Omega$ is densely occupied by all the
conjugates of all the Parry numbers \cite{solomyak}, with
a major concentration of conjugates occuring in a
neighbourhood of the unit circle.

Beta-conjugates are then equivalently defined either
as roots of $P_{\beta,P}(X)$, as inverses of zeros of 
$f_{\beta}(z)$, as inverses of poles of  
the dynamical zeta function $\zeta_{\beta}(z)$.
The three equivalent definitions arise from the relations
\eqref{fzetaNONsimple} and \eqref{fzetaSIMPLE} 
(``simple" case), deduced from \cite{itotakahashi} 
\cite{flattolagariaspoonen}.

The Galois- and beta- conjugates $\beta^{(i)}$
of a Parry number $\beta$ all lie in
Solomyak's fractal \cite{solomyak}, 
represented in Figure \ref{fractalSOLO}.
The left extremity of the spike on the real 
negative axis is $-(1+\sqrt{5})/2$ and
the general bound 
$|\beta^{(i)}| \leq  (1+\sqrt{5})/2$ holds
for all $i$ and all Parry numbers $\beta$;
this upper bound was also found by 
Flatto, Lagarias and Poonen \cite{flattolagariaspoonen}.

\begin{figure}
\begin{center}
\includegraphics[width=7cm]{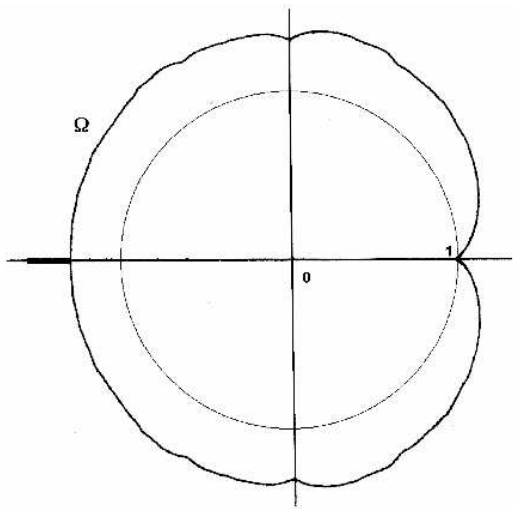}
\end{center}
\caption{Solomyak's fractal $\Omega$.}
\label{fractalSOLO}
\end{figure}

Let $\beta$ be a Parry number. The three following 
assertions are obviously equivalent:
(i) $\beta$ has no beta-conjugate, (ii) the Parry polynomial
of $\beta$ is irreducible, (iii) the Parry polynomial of
$\beta$ is equal to the minimal polynomial of $\beta$.
For some families of Parry numbers 
\cite{kwon} \cite{vergergaugry2} it is possible
to deduce the irreducibility of their Parry polynomials.

By Szeg\H{o}-Carlson-Poly\'a Theorem \cite{dienes}, 
the Parry Upper function
$f_{\beta}(z)$ is a rational fraction if and only if
$\beta$ is a Parry number \cite{vergergaugry2}.
If $\beta > 1$ is an algebraic number, but 
not a Parry number, $f_{\beta}(z)$
is an analytic function on the open unit disc
with the unit circle as natural boundary.

For $\beta > 1$ any algebraic number, 
except a Parry number,
we define a beta-conjugate of $\beta$
as the inverse of a zero of the function
$f_{\beta}(z)$, if it exists. 
A priori, it may happen that
$f_{\beta}(z)$ admits the only zero 
$1/\beta$ in its domain of definition
$D(0,1)$, with $|z|=1$ as natural
boundary. The problem of the existence of zeros
of $f_{\beta}(z)$ in $D(0,1)$ 
is linked to the gappiness (the terminology 
{\it gappiness} was
introduced in \cite{vergergaugry0}
as a notion which is much weaker than that of {\it lacunarity};
indeed {\it lacunarity} 
is classically associated to Hadamard gaps)  
of the sequence
$(t_i)$ and its Diophantine approximation properties
\cite{vergergaugry0} \cite{adamczewskibugeaud}; 
this gappiness cannot be too large at infinity
and the Ostrowski ``quotients of the gaps"
are dominated by $\log {\rm M}(\beta)/\log \beta$, 
where M$(\beta)$ is the Mahler measure of $\beta$.

By a Theorem of Fuchs \cite{vergergaugry1},
if $f_{\beta}(z)$ is such that
$(t_i)$ admits Hadamard gaps, then the number of zeros of
$f_{\beta}(z)$ is infinite in $D(0,1)$. 
This occurence, of having Hadamard gaps, is 
conjectured to be true for infinitely many
transcendental numbers $\beta > 1$ but to be impossible
as soon as $\beta > 1$ is an algebraic number.
If $\beta > 1$ is an algebraic number, 
the number of zeros of $f_{\beta}(z)$ in $D(0,1)$, 
i.e. the number of beta-conjugates
of $\beta$ of modulus $> 1$,
is conjectured to be finite.
This finiteness property of the number of beta-conjugates
would be in agreement with
the existence of an integer $M \geq 1$ in
\eqref{nombrePointsFixes}, in the context of the 
dynamical zeta function.

\section{Fractionary power series and Puiseux expansions for germs of curves}
\label{S4}

In the sequel, 
we will follow Casas-Alvero \cite{casasalvero},
Duval \cite{duval}, Walker \cite{walker},
Walsh \cite{walsh} and restrict ourselves to
what is needed for the application of the
theory of Puiseux to beta-conjugates of algebraic numbers $> 1$, 
to fix notations.  
The terminology "fractionary" is taken from
\cite{casasalvero}.
Let $k$ be a (commutative) field of characteristic zero and
let $G(X,Y) \in k[[X,Y]]$. 
We consider the formal equation
$$G(X,Y) = 0$$
and 
are interested in solving
it for $Y$, that is 
we want to find some sort of series
in $X$, say $Y(X)$, with coefficients in $k$, such that
\begin{equation}
\label{Gequation}
G(X, Y(X)) = 0,
\end{equation}
$G(X,Y(X))$ being the series in $X$ obtained by substituting
$Y(X)$ for $Y$ in $G$.
The series $Y(X)$ is called a $Y$-root of $G$.
When $k = \mathbb{C}$, this general 
problem was considered by Newton.
In the following we will consider
$k = \mathbb{C}$ and will consider
rationality questions over 
smaller fields $k$ in Section \ref{S6}.
 
For solving \eqref{Gequation}, 
we need to deal with series in fractionary powers of $X$.
First, let us define the field of 
fractionary power series over $\mathbb{C}$.
Denote $\mathbb{C}((X))$  the field of the formal Laurent series
$$\sum_{i=d}^{\infty} a_i X^i , \qquad d \in \mathbb{Z}, a_i \in \mathbb{C}.$$

An element of $\mathbb{C}((X^{1/n}))$ has the form
$$s = \sum_{i \geq r}  a_i X^{i/n}.$$

The field of fractionary power series is
denoted by $\mathbb{C}\!\!\!\ll \!\!X \!\!\!\gg$ 
and by definition is the direct limit
of the system
$$\Bigl\{ \mathbb{C}((X^{1/n})), \iota_{n,n'} \Bigr\},$$
where, for $n$ dividing $n'$ (with $n' = d n$),
$$\iota_{n,n'} : \mathbb{C}((X^{1/n})) \to \mathbb{C}((X^{1/n'})),\quad
\sum  a_i X^{i/n} ~\to~ \sum  a_i X^{d  i/ d n}.$$

A Puiseux series is by definition a fractionary power series
$$s = \sum_{i \geq r}  a_i X^{i/n}$$
for which the order in $X$ 
$$o_{X}(s) := \frac{\min\{i \mid a_i \neq 0\}}{n}$$
is (strictly) positive. 
A natural representant of its class in the direct limit
is such that
$n$ and gcd$\{i \mid a_i \neq 0\}$ have no common factor;
then 
$n$ is called the ramification index 
(or polydromy order) of $s$, denoted by $\nu(s)$.

If $s \in \mathbb{C}((X^{1/n}))$ is a Puiseux series, with
$n = \nu(s)$ its ramification index, the series
$\sigma_{\epsilon}(s), \epsilon^n = 1$, will be called the
conjugates of $s$, where
$$\sigma_{\epsilon}(s) = \sum_{i \geq r}  \epsilon^i a_i X^{i/n}.$$
The set of all (distinct) 
conjugates of $s$ is called the conjugacy class of
$s$. The number of different conjugates 
of $s$ is $\nu(s)$.

Let us recall the Newton polygon of a two-variable formal series. Let
$$G = G(X,Y) =\sum_{i > 0, j > 0} A_{i,j} X^{i} Y^j \qquad \in \mathbb{C}[[X,Y]] $$
and obtain the discrete set of points 
with nonnegative integral coefficients
$$\Delta(G) := \{(i,j) \mid A_{i,j} \neq 0 \},$$
called the Newton diagram of $G$. Let 
$(\mathbb{R}^{+})^2 := \{(x,y) \mid x \geq 0, y \geq 0\}$ 
be the first quadrant in the plane $\mathbb{R}^2$ 
and consider
$$\Delta'(G) := \Delta(G) + (\mathbb{R}^{+})^2 .$$
Then the convex hull of $\Delta'(G)$ 
admits a border which is composed
of two half-lines (a vertical one, an horizontal one,  
parallel to the coordinate axes) 
and a polygonal line, called the Newton polygon of
$G$, joining them, denoted by $\mathcal{N}(G)$. 
The height $h(\mathcal{N}(G))$ of $G$ 
is by definition the maximal ordinate of the vertices
of the Newton polygon $\mathcal{N}(G)$.

If $y(X) = \sum_{q \geq 1} a_q \left(X^{1/\nu(y)}\right)^{q}$ 
is a Puiseux series, write 
$G_y = G_{y}(X,Y) =\prod_{i=1}^{\nu(y)} (Y - y_{i}(X))$,
the $y_{i}, i= 1, \ldots, 
\nu(y)$ being the conjugates of $y$.
The series $G_y$ is irreducible in 
$\mathbb{C}[[X,Y]]$.
The theory of Puiseux allows a formal decomposition as follows.

\begin{theorem}
\label{decompoMAINthm}
For any $G = G(X,Y) \in \mathbb{C}[[X,Y]]$,
\begin{itemize}
\item[(i)] there are Puiseux series 
$y_{1}, y_{2}, \ldots, y_{m}, m \geq 0$, in
$\mathbb{C}\!\!\ll \!\!X \!\!\gg$
so that $G$ decomposes in the form
$$G = u \, X^{r} \, G_{y_1} \, G_{y_2} \ldots G_{y_s}
$$
where $r \in \mathbb{Z}$, and $u$ is an invertible series
in $\mathbb{C}[[X,Y]]$,
\item[(ii)] the height of the Newton polygon
of $g$ is the sum of the ramification indices 
$$h(\mathcal{N}(G)) = \nu(y_1) + \nu(y_2) + \ldots + \nu(y_s)$$
and the $Y$-roots of $G$ are the conjugates of the
$y_{j}(X), j= 1, \ldots ,s$.
\end{itemize}
\end{theorem}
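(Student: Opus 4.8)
The plan is to reduce the statement to the classical Puiseux theorem for a single irreducible Weierstrass polynomial and then iterate. First I would dispose of the monomial factor: since $G \in \mathbb{C}[[X,Y]]$, write $G = X^{a} Y^{b} G_0$ where $G_0$ is not divisible by $X$ or $Y$; the factor $Y^{b}$ is absorbed into $X^{0}\cdot G_{y}$-type factors with $y \equiv 0$ (ramification index $1$), or, if one prefers to keep the statement as written with a single power $X^{r}$, one simply notes $Y^{b} = G_{y_1}\cdots G_{y_b}$ with each $y_i = 0$. After this normalization $G_0(0,0)$ may still vanish, but $G_0$ is regular in $Y$ of some finite order $N = h(\mathcal{N}(G_0))$ equal to the height of the Newton polygon (this is where the height enters: the vertical edge of $\mathcal{N}(G_0)$ sits at ordinate $N$ because $X \nmid G_0$, and the lowest point of the polygon is on the $Y$-axis because $Y \nmid G_0$, so the total vertical extent of the polygonal part is exactly $N$). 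By the Weierstrass preparation theorem, $G_0 = u \cdot P$ with $u$ a unit in $\mathbb{C}[[X,Y]]$ and $P \in \mathbb{C}[[X]][Y]$ a monic polynomial in $Y$ of degree $N$.

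The core step is the Newton–Puiseux algorithm applied to $P$. I would run the standard construction: read off the first segment of the Newton polygon of $P$, solve the associated characteristic equation over $\mathbb{C}$ (possible since $\mathbb{C}$ is algebraically closed), substitute $Y = c X^{\mu} + X^{\mu} Y_1$ with $\mu$ the slope, clear the common power of $X$, and recurse. Because $\mathbb{C}$ has characteristic zero and is algebraically closed, this process terminates in the sense that it produces, after finitely many steps and possibly after a base change $X \mapsto X^{1/e}$, exactly $N$ formal Puiseux solutions $y_1,\dots,y_N$ counted with multiplicity, each an element of $\mathbb{C}\!\!\ll\!\!X\!\!\gg$ with positive order. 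Grouping these $N$ roots into conjugacy classes under $\sigma_\epsilon$, each class of size $\nu(y_j)$ corresponds to one irreducible factor $G_{y_j} = \prod_{i=1}^{\nu(y_j)}(Y - y_{j,i}(X))$, which lies in $\mathbb{C}[[X]][Y] \subset \mathbb{C}[[X,Y]]$ and is irreducible there (its irreducibility is exactly the statement recalled just before the theorem). Since $\sum_j \nu(y_j) = N = \deg_Y P$ and both sides are monic in $Y$, we get $P = \prod_{j=1}^{s} G_{y_j}$, hence $G = u' X^{r} \prod_j G_{y_j}$ after folding the Weierstrass unit $u$ (and any sign/leading-coefficient adjustment) into $u'$. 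This proves (i), and the identity $N = \sum_j \nu(y_j)$ together with $N = h(\mathcal{N}(G_0)) = h(\mathcal{N}(G))$ (the monomial prefactor $X^{a}Y^{b}$ only translates the Newton polygon, and translation by $(a,b)$ with the $Y$-axis edge preserved does not change the vertical extent once we account correctly for the $Y^{b}$ factor, each contributing $1$) gives the first half of (ii).

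For the second half of (ii) I would argue that the $Y$-roots of $G$ in $\mathbb{C}\!\!\ll\!\!X\!\!\gg$ are precisely the roots of $P$ (the unit $u$ and the monomial $X^{r}$ never vanish as fractionary power series of positive order in the relevant sense, or vanish only at $Y$ identically zero which is already accounted for), and the roots of a product $\prod_j G_{y_j}$ are the union of the roots of the factors; each $G_{y_j}$ has as its roots exactly the conjugates $\{\sigma_\epsilon(y_j) : \epsilon^{\nu(y_j)} = 1\}$ by construction. The step I expect to be the genuine obstacle is the \emph{termination and completeness} of the Newton–Puiseux iteration — i.e. verifying that the algorithm yields a full set of $N$ solutions and does not stall — since this is where algebraic closedness of $\mathbb{C}$ and characteristic zero are essential; in a rigorous write-up this is the part one cites from Casas-Alvero or Walker rather than reproving, and I would do the same, quoting the classical Newton–Puiseux theorem for Weierstrass polynomials and then performing only the bookkeeping (Weierstrass preparation, handling of the monomial factor, matching the Newton polygon height) in detail.
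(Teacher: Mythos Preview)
Your outline is correct and follows the standard route (extract monomial factor, apply Weierstrass preparation to get a monic $P \in \mathbb{C}[[X]][Y]$ of degree $N = h(\mathcal{N}(G))$, run the Newton--Puiseux algorithm to obtain $N$ fractionary roots grouped into conjugacy classes, and read off the identity $\sum_j \nu(y_j) = N$). There is nothing to compare against: the paper does not supply its own proof of this theorem. It is stated as a classical background result, and immediately after the statement the paper simply remarks that ``the Newton--Puiseux algorithm applied to the Newton polygon $\mathcal{N}(G)$ of $G$ allows to compute all the $Y$-roots of $G(X,Y)$ and the ramification indices'', citing Casas-Alvero, Duval, and Walker. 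Your sketch is precisely the argument one finds in those references, and your instinct to quote the termination/completeness of the Newton--Puiseux iteration rather than reprove it is exactly what the paper does as well.

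One small bookkeeping point worth tightening if you ever write this out in full: your sentence about the height being preserved under the translation induced by the monomial prefactor $X^a Y^b$ is slightly imprecise. Multiplication by $Y^b$ shifts the Newton diagram up by $b$, so $h(\mathcal{N}(G)) = b + h(\mathcal{N}(G_0)) = b + N$; this matches because the $b$ copies of the trivial root $y=0$ each contribute $\nu(0)=1$ to the sum of ramification indices, exactly as you noted parenthetically. Just make sure the accounting is stated cleanly rather than as an aside.
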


The Newton-Puiseux algorithm applied to the Newton polygon
$\mathcal{N}(G)$ of $G$ allows to compute all the $Y$-roots
of $G(X,Y)$ and the ramification indices 
\cite{casasalvero} \cite{duval} \cite{walker}.

\begin{definition}
Let $k$ be a  
(commutative) field 
of characteristic zero
and $g(X,Y) \neq 0$ an element of
$k[[X,Y]]$ such that 
$g(0,0)=0$.
A parametrization of $g$ is an ordered pair 
$(\mu_1(T), \mu_2(T))$ of elements of $k[[T]]$
which satisfies
\begin{itemize}
\item[(i)] $\mu_1$ and $\mu_2$ are not simultaneously identically zero,
\item[(ii)] $\mu_1(0)= \mu_2(0) = 0$,
\item[(iii)] $g(\mu_1(T), \mu_2(T))=0 \, \in k[[T]]$.
\end{itemize}
\end{definition}

Denote $\mathbb{C}\{x_1, x_2, \ldots, x_q\}$ the ring 
of convergent power series, and turn to convergence questions.
Let $s = \sum_{i \geq 0} a_i X^{i/n}$ be a fractionary power series,
with $a_i \in \mathbb{C}$. We say that
$s$ is a convergent fractionary power series if and only if
the ordinary power series
$$s(t^n) = \sum_{i \geq 0} a_i t^i$$
has nonzero convergence radius. 
This condition does not depend upon the integer
$n$ and the set of convergent fractionary power series 
$\mathbb{C}\{X\}$ is
a subring of $\mathbb{C}\!\!\ll\!X\!\gg$.

If $s$ is convergent, with $\nu(s) =n$,
one may compose the polydromic
(multivalued) function $z \to z^{1/n}$ 
and the analytic function
defined by $s(t^n)$ in a neighbourhood of 
$t=0$: we obtain a polydromic function
$\overline{s}$, defined in a neighbourhood of 
$z=0$, which we call the (polydromic) function 
associated with $s$. 
If $s$ is convergent, all its conjugates
are also convergent and any of them 
defines the same polydromic function 
$\overline{s}$ as $s$. 
If $s$ is convergent, the associated
function $\overline{s}$ takes $\nu(s)$
different values on each $z_0 \neq 0$
in a suitable neighbourhood
of $0$.

In the context of convergent series the theory of Puiseux
makes Theorem \ref{decompoMAINthm} more accurate as follows.

\begin{theorem}
\label{decompCONVERGENTthm}
If $G(x,y) \in \mathbb{C}\{x, y\}$ 
is a convergent series, then all its $y$-roots
are convergent, and there are an invertible
series $v \in \mathbb{C}\{x,y\}$ and a nonnegative integer
$r$, both uniquely determined  
by $G$, and convergent Puiseux series
$y_1, y_2, \ldots, y_s$, uniquely 
determined by $G$ up to conjugation
so that
\begin{equation}
\label{GgermCONVERGENT}
G = v x^r G_{y_1} G_{y_2} \ldots G_{y_s}.
\end{equation}
\end{theorem}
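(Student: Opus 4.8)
The plan is to bootstrap Theorem~\ref{decompCONVERGENTthm} from the purely formal Theorem~\ref{decompoMAINthm}, upgrading each ingredient from the formal category $\mathbb{C}[[x,y]]$ to the convergent one $\mathbb{C}\{x,y\}$. First I would apply Theorem~\ref{decompoMAINthm} to $G$ regarded merely as a formal series, obtaining Puiseux series $y_1, \ldots, y_s \in \mathbb{C}\!\!\ll\!x\!\gg$, an integer $r \in \mathbb{Z}$, and a unit $u \in \mathbb{C}[[x,y]]$ with $G = u\, x^r G_{y_1} \cdots G_{y_s}$. Since $G$ is convergent and, being a power series (with $G(0,0)$ possibly nonzero), $x^{-r}G \in \mathbb{C}[[x,y]]$ only if $r \ge 0$; one checks $r = o_x(G(x,0)) \ge 0$ so in fact $r$ is a nonnegative integer, and $x^r$ is trivially convergent.

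Next I would establish convergence of the $y$-roots. The standard route is the Weierstrass preparation theorem in the convergent category: writing $G = x^r G_0$ with $G_0(0,0) \ne 0$ would be false in general, so instead one first factors out the unit part in $y$ and reduces to a Weierstrass polynomial $P(x,y) = y^h + c_{h-1}(x) y^{h-1} + \cdots + c_0(x)$ with $c_i \in \mathbb{C}\{x\}$, $c_i(0) = 0$, where $h = h(\mathcal{N}(G))$. Here the convergent Weierstrass division/preparation theorem (valid since $\mathbb{C}\{x,y\}$ is Henselian and Noetherian) gives $G = v \cdot P$ with $v \in \mathbb{C}\{x,y\}$ invertible. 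Then the Newton--Puiseux algorithm applied to $P$ produces the Puiseux roots termwise, and the key classical estimate --- which I would cite from Casas-Alvero~\cite{casasalvero} or Walker~\cite{walker} --- is that the successive Newton polygon reductions force geometric decay of the coefficients, so each $y_j(x)$ has $y_j(t^{\nu(y_j)})$ of positive radius of convergence; hence $y_j \in \mathbb{C}\{x\}$ and $G_{y_j} = \prod_i (y - y_{j,i}(x))$ lies in $\mathbb{C}\{x,y\}$ as a finite product of convergent series. This matches the decomposition $G = v\, x^r G_{y_1} \cdots G_{y_s}$ with everything convergent.

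Finally I would address uniqueness. The unit $v$ and the exponent $r$ are determined by $r = o_x(G(x,0))$ and $v = x^{-r} G / (G_{y_1} \cdots G_{y_s})$, so their uniqueness follows once the $y_j$ are pinned down up to conjugation; and the latter is already the content of the uniqueness clause of the formal decomposition in Theorem~\ref{decompoMAINthm}, since two convergent decompositions are in particular two formal ones. One should also remark that conjugation does not affect convergence, by the observation recorded just before the theorem that all conjugates of a convergent Puiseux series are convergent and define the same polydromic function.

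The main obstacle is the convergence estimate for the Newton--Puiseux algorithm: proving that the iterated substitutions $y = x^{\mu}(c + y_1)$ of the algorithm produce coefficients with a uniform geometric bound, so that the resulting fractionary series actually converges rather than merely existing formally. This is the one genuinely analytic step --- everything else is formal algebra plus a citation of Weierstrass preparation --- and it is where I would lean most heavily on the classical references~\cite{casasalvero}~\cite{walker}, since reproving it here would be a substantial digression.
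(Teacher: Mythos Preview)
The paper does not actually prove Theorem~\ref{decompCONVERGENTthm}: Section~\ref{S4} is explicitly a recollection of classical Puiseux theory, and both Theorem~\ref{decompoMAINthm} and Theorem~\ref{decompCONVERGENTthm} are stated without proof, with the reader directed to Casas-Alvero~\cite{casasalvero}, Duval~\cite{duval}, and Walker~\cite{walker} for the details. So there is no ``paper's own proof'' to compare against.

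That said, your sketch is the standard route taken in those very references: reduce to a Weierstrass polynomial via the convergent preparation theorem, run the Newton--Puiseux algorithm, and invoke the classical convergence estimate for the iterated substitutions. Your identification of the convergence bound as the one genuinely analytic step, and your decision to cite it rather than reprove it, matches exactly how the paper treats the whole theorem. One small caution: your formula $r = o_x(G(x,0))$ is only correct when $G(x,0)$ is not identically zero; in general $r$ is the largest power of $x$ dividing $G$ in $\mathbb{C}\{x,y\}$, which can differ (e.g.\ $G = y$ has $r = 0$ but $G(x,0) \equiv 0$). This does not affect the overall argument but would need tightening in a full proof.
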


If $G$ is a polynomial in $Y$, i.e. if 
$G \in \mathbb{C}[[X]][Y]$, and
if the coefficients $a_q$ of the Puiseux expansions involved in its decompositon
are algebraic numbers,
denote $L = \mathbb{Q}(a_1, a_2, \ldots)$ the number field generated
by the coefficients. Assume 
$[L : \mathbb{Q}] < +\infty$ and
let $r := [L : \mathbb{Q}]$. 
Let $\sigma_1, \sigma_2, \ldots, \sigma_r$, the
$r$ embeddings of $L$ into $\overline{\mathbb{Q}}$.
Denote
$$C = C(y(X)) := \left\{
\sum_{q \geq 1} \sigma_{i}(a_q) \left(
\zeta_{\nu(y)}^{j} X^{1/\nu(y)} \right) 
\mid
i = 1, \ldots,r \, , \, 
j= 0, 1, \ldots, \nu(y)
\right\}$$
the $L$-rational conjugacy class of $y(X)$.
By Proposition 2.1 in Walsh \cite{walsh}, 
assuming that all the Puiseux expansions of $X$ 
in $G$ are distinct,
$$\prod_{i=1}^{\nu(y)} (Y - y_{i}(X))$$
is irreducible in $\overline{\mathbb{Q}}((X))[Y]$, of degree 
$\nu(y)$ in $Y$, and
\begin{equation}
\label{produitPARclassesRATIONAL}
\prod_{y_i \in C} (Y - y_{i}(X))
\end{equation}
is irreducible in $\mathbb{Q}((X))[Y]$ of degree
$\nu(y) r/r_0$ in $Y$ where
$$r_0 := \{\sigma : L \to \overline{\mathbb{Q}}
\mid
\exists t \in \mathbb{Z} ~\mbox{such that}~
\sigma(a_q) = a_q \zeta_{\nu(y)}^{t q}
~\mbox{for all}~ q \geq 1
\}.$$
If, in addition, $G$ is assumed convergent, gathering the
Puiseux expansions by $L$-rational conjugacy classes, whose
number is (say) $e$, the collection of such classes
being 
$(C_j)_{j=1,\ldots,e}$, allows to write
$G$ in the form of the product of a unit 
$v \in \mathbb{C}[[x,y]]$
by a nonnegative power $x^r$
of the first variable $x$ and 
a product of $e$ irreducible polynomials
in $\mathbb{Q}[[x]][y]$ as follows:

\begin{equation}
\label{GgermCONVERGENT}
G = v x^r \prod_{j=1}^{e} \prod_{y_i \in C_j} (y - y_{i}(x)).
\end{equation}

\section{Beta-conjugates as Puiseux expansions}
\label{S5}

Let $\beta > 1$ be an algebraic number, 
not necessarily a Parry number. 
In the sequel we will not consider 
the case where $\beta > 1$ is a rational integer:
indeed, in this case, 
$\beta$ has no Galois conjugate $\neq \beta$, and
$f_{\beta}(z)=-1 + \beta z$ is a polynomial
having only the root $1/\beta$; therefore $\beta$ 
has no beta-conjugate.

The
key observation, that the three 
functions
$z-1/\beta$,
$P_{\beta}^{*}(z)$,
$f_{\beta}(z)$  
cancel at $1/\beta$, each of them with multiplicity one,
leads to consider the 
point $(0, 1/\beta)$ of $\mathbb{C}^{2}$ as natural origin 
of the germ of curve.
Therefore we consider the new variable $Z:=z-1/\beta$ and
make the change of variable $z \to Z$ into $f_{\beta}(z)$ and
$P_{\beta}^{*}(z)$, as follows:
$$\widetilde{f_{\beta}}(Z) := f_{\beta}(z), \qquad
\widetilde{P_{\beta}^{*}}(Z) := P_{\beta}^{*}(z).$$

\begin{lemma}
\label{fbetatilde}
Let $\beta > 1$ be a real number. Then
\begin{equation}
\label{exprfbetatilde}
\widetilde{f_{\beta}}(Z) ~=~ \sum_{j \geq 1}
\lambda_j Z^j
\end{equation}
with
$\lambda_j = \lambda_{j}(\beta) := 
\sum_{q \geq 0} t_{j+q}
\left(
\begin{array}{cc}
j+q\\j
\end{array}
\right)
\left(\frac{1}{\beta}\right)^q .
$
\end{lemma}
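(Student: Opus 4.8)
The plan is to substitute $z = Z + 1/\beta$ directly into the definition $f_\beta(z) = -1 + \sum_{i\geq 1} t_i z^i$ and collect powers of $Z$. First I would expand each monomial $z^i = (Z + 1/\beta)^i$ by the binomial theorem:
\[
z^i = \sum_{k=0}^{i} \binom{i}{k} Z^k \left(\frac{1}{\beta}\right)^{i-k}.
\]
Plugging this into the series for $f_\beta$ gives a double sum $-1 + \sum_{i\geq 1} t_i \sum_{k=0}^i \binom{i}{k} Z^k (1/\beta)^{i-k}$, and the task is to interchange the order of summation to group by the power $k$ of $Z$. After the interchange, the coefficient of $Z^k$ for $k \geq 1$ becomes $\sum_{i \geq k} t_i \binom{i}{k} (1/\beta)^{i-k}$, which upon reindexing with $i = k + q$ is exactly $\sum_{q\geq 0} t_{k+q}\binom{k+q}{k}(1/\beta)^q$; the constant term $Z^0$ collects $-1 + \sum_{i\geq 1} t_i (1/\beta)^i = -1 + 1 = 0$ by the identity $1 = \sum_{i\geq 1} t_i \beta^{-i}$ in \eqref{rereDBETA}, confirming that the expansion indeed starts at $j = 1$.

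The one genuine subtlety — and the main thing to justify carefully rather than wave at — is the legitimacy of rearranging this double series. I would handle it by noting that $f_\beta(z)$ is analytic on the open unit disc $D(0,1)$ (its coefficients $t_i$ are bounded by $\lceil\beta\rceil$, so the radius of convergence is at least $1$), and $1/\beta$ is an interior point; hence $f_\beta$ has a convergent Taylor expansion about $z = 1/\beta$ valid in a neighbourhood, and $\widetilde{f_\beta}(Z)$ is simply that Taylor series in the shifted variable. Then $\lambda_j = \frac{1}{j!}\widetilde{f_\beta}^{(j)}(0) = \frac{1}{j!} f_\beta^{(j)}(1/\beta)$, and differentiating $f_\beta(z) = -1 + \sum_i t_i z^i$ term by term $j$ times (valid inside the disc of convergence), then evaluating at $z = 1/\beta$, yields
\[
\lambda_j = \frac{1}{j!}\sum_{i\geq j} t_i \, i(i-1)\cdots(i-j+1)\left(\frac{1}{\beta}\right)^{i-j} = \sum_{i\geq j} t_i \binom{i}{j}\left(\frac{1}{\beta}\right)^{i-j},
\]
which after the substitution $i = j + q$ gives the claimed formula. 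Either route — absolute convergence of the double sum justifying the interchange, or the Taylor-coefficient computation — works; the Taylor approach is cleanest because convergence of the termwise-differentiated series on compact subsets of $D(0,1)$ is standard.

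I expect no real obstacle here: the statement is essentially the observation that a power series recentered at an interior point of its disc of convergence has binomially-transformed coefficients, together with the vanishing of the constant term coming from $f_\beta(1/\beta) = 0$. The only place to be slightly careful is to record why $1/\beta$ lies strictly inside the disc of convergence of $f_\beta$ (namely $\beta > 1$, so $1/\beta < 1$, and the radius is $\geq 1$) so that the Taylor expansion about $1/\beta$ genuinely converges in a punctured neighbourhood of $Z = 0$; this also makes $\widetilde{f_\beta}$ a bona fide convergent power series in $Z$, which is what is needed downstream for the Puiseux machinery of Section~\ref{S4}.
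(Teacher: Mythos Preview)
Your proposal is correct and follows essentially the same approach as the paper: the paper's proof is the single sentence ``Expanding $f_{\beta}(z) = -1 + \sum_{i \geq 1} t_i (z - \frac{1}{\beta} + \frac{1}{\beta})^i$ as a function of $Z=z-1/\beta$ readily gives \eqref{exprfbetatilde},'' which is exactly your binomial-expansion-and-regrouping argument. Your version is more careful than the paper's in justifying the interchange of summation (via absolute convergence or, equivalently, the Taylor-coefficient viewpoint) and in explaining why the constant term vanishes, but the underlying computation is identical.
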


\begin{proof}
Expanding $f_{\beta}(z) 
= -1 + \sum_{i \geq 1} t_i (z - \frac{1}{\beta} 
+ \frac{1}{\beta})^i$ as a function of $Z=z-1/\beta$ readily gives
\eqref{exprfbetatilde}.
\end{proof}

Let $\beta > 1$ be any real number.
The series $\lambda_j = \lambda_{j}(\beta), j \geq 1,$ 
have nonegative terms and,
by Stirling's formula applied to the binomial coefficients,
are convergent.

\begin{proposition}
\label{lambda_jCONTINUOUS}
Let $\beta > 1$ be a real number. 
For all $j \geq 1$, 
the map $(1,+\infty) \to \mathbb{R}^{+} , 
\beta \to \lambda_{j}(\beta)$
is right-continuous. The set of discontinuity points is contained
in the set of simple Parry numbers.
\end{proposition}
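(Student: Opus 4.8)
The plan is to analyze the formula
$\lambda_j(\beta) = \sum_{q \ge 0} t_{j+q}(\beta)\binom{j+q}{j}\beta^{-q}$
and pin down exactly how $\lambda_j$ depends on $\beta$ through the digits $t_i(\beta)$ and through the explicit powers $\beta^{-q}$. The key fact I would invoke is that, for each fixed index $i$, the digit function $\beta \mapsto t_i(\beta) = \lfloor \beta\, T_\beta^{i-1}(1)\rfloor$ depends on $\beta$ only through the iterates $T_\beta^{k}(1)$ for $k < i$, and that these iterates, as functions of $\beta$, are right-continuous with jump discontinuities occurring only where some $T_\beta^{k}(1)$ hits $0$; such an event is precisely the signature of $\beta$ being a simple Parry number (its R\'enyi expansion $d_\beta(1)$ terminating). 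More precisely, for $\beta$ not a simple Parry number all iterates $T_\beta^k(1)$ are strictly positive, the greedy algorithm has no ``boundary'' ambiguity, and each $t_i(\cdot)$ is locally constant near $\beta$; for $\beta$ a simple Parry number the digit sequence can jump as $\beta$ increases through that value, but by the monotonicity/right-continuity of the greedy expansion the digits are right-continuous there. This is the standard lexicographic/order-theoretic machinery for the $\beta$-transformation (Parry, Frougny, etc., as cited in Section \ref{S3}), so I would state it as a lemma or cite it.

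First I would fix $\beta_0 > 1$ and $j \ge 1$, and treat the two cases. \emph{Case 1: $\beta_0$ is not a simple Parry number.} Then there is a neighbourhood $(\beta_0-\eta, \beta_0+\eta)$ on which every digit $t_i(\beta)$, for $i$ in any bounded range, is constant equal to $t_i(\beta_0)$ — here I need a uniformity argument: since the digits are bounded by $\lceil\beta-1\rceil$ and the tail $\sum_{q \ge N} t_{j+q}\binom{j+q}{j}\beta^{-q}$ is, by Stirling, uniformly small for $\beta$ bounded away from $1$, it suffices that finitely many digits are locally constant, which follows from continuity of the (strictly positive) iterates. Hence $\lambda_j(\beta) = \lambda_j(\beta_0) + \sum_q t_{j+q}(\beta_0)\binom{j+q}{j}(\beta^{-q} - \beta_0^{-q})$ locally, and the right-hand side is genuinely continuous (not merely right-continuous) at $\beta_0$ because $\beta \mapsto \beta^{-q}$ is continuous and the series converges uniformly on a neighbourhood by the Stirling tail bound. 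So $\lambda_j$ is continuous — in particular right-continuous — at every such $\beta_0$, and $\beta_0$ is not a discontinuity point.

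\emph{Case 2: $\beta_0$ is a simple Parry number.} Here the digit sequence $t_i(\cdot)$ may fail to be left-continuous at $\beta_0$ but is right-continuous there, for each $i$: as $\beta \downarrow \beta_0$ the greedy expansion of $1$ stabilizes to $d_{\beta_0}(1)$ on any fixed finite window. Combining this window-wise right-continuity with the same uniform-tail estimate from Stirling's formula, I would conclude $\lim_{\beta \downarrow \beta_0}\lambda_j(\beta) = \lambda_j(\beta_0)$; the only subtlety is interchanging the limit with the infinite sum over $q$, which is justified exactly as in Case 1 by dominated convergence with a geometric-times-polynomial majorant valid on $[\beta_0, \beta_0+\eta]$. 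Thus $\lambda_j$ is right-continuous everywhere, and its discontinuity points (necessarily failures of left-continuity) lie in the set of simple Parry numbers, as claimed.

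The main obstacle is not any single estimate but the careful bookkeeping of \emph{how large} an index window one must control: one has to show that a right-neighbourhood of $\beta_0$ can be chosen so that digits $t_i$ agree with $t_i(\beta_0)$ simultaneously for all $i \le N$ with $N = N(\varepsilon)$ chosen from the Stirling tail bound, uniformly as $\beta$ ranges over that neighbourhood. The delicate point is that shrinking the neighbourhood to fix more digits and enlarging $N$ to kill more of the tail must be compatible; this works because $N$ depends only on $\varepsilon$ and an upper bound for $\beta$ (hence can be fixed first), after which finitely many digit-stabilization conditions each cut out a right-neighbourhood and one intersects them. I expect this interchange-of-limits/uniformity step to be where the real content lies; everything else is the standard greedy-expansion order theory plus Stirling.
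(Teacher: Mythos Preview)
Your proposal is correct and follows essentially the same approach as the paper: fix a uniform tail bound (via Stirling) on $[\beta_0,\beta_0+\eta]$, then use that finitely many digits $t_i(\cdot)$ are locally constant at non--simple-Parry $\beta_0$ (giving two-sided continuity there) and right-stabilize at simple Parry $\beta_0$. The paper's write-up differs only cosmetically, splitting the simple-Parry case into sub-cases according to whether the expansion length $N$ lies below, inside, or above the truncation window $[j,q_0]$, whereas you handle all three at once via finite-window stabilization of the digits.
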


\begin{proof}
Assume $\beta > 1$ a real number which is not an integer.
Let us fix $j \geq 1$. There exists $u > 0$
such that the open interval
$(\beta - u, \beta + u)$ contains no integer. Then 
any $\beta' \in (\beta - u, \beta + u)$ is such that
its R\'enyi $\beta'$-expansion $d_{\beta'}(1)$ of $1$ 
has digits $t_{q}(\beta')$ within the same alphabet
which is $\mathcal{A}_{\beta} =
\{0, 1, \ldots, \lfloor \beta \rfloor\}$.
Let $\epsilon > 0$. Then there exists
$q_0 \geq j$ such that
$$\sum_{q > q_0}
\left(
\begin{array}{cc}
q\\j
\end{array}
\right) 
\left(
\frac{1}{\beta - u}
\right)^{q-j} < \frac{\epsilon}{4 \lfloor \beta \rfloor}.
$$
Then, for all $\beta' \in (\beta - u, \beta + u)$,
since $1/\beta' \leq 1/(\beta - u)$,
the following uniform inequality holds:
\begin{equation}
\label{follo}
\sum_{q > q_0}
t_{q}(\beta') 
\left(
\begin{array}{cc}
q\\j
\end{array}
\right)
\left(
\frac{1}{\beta'}
\right)^{q-j} < \frac{\epsilon}{4}.
\end{equation}
Now there are are two cases: either $\beta$ is a simple Parry number, or not.

(i) Assume $\beta > 1$ is not a simple Parry number.
Then the sequence $(t_{i}(\beta))_i$ is infinite 
(does not end in infinitely many zeros).
There exists $\eta > 0, \eta < u,$ 
small enough such that
$t_{1}(\beta') = t_{1}(\beta),
t_{2}(\beta') = t_{2}(\beta),
\ldots,
t_{q_1}(\beta') = t_{q_1}(\beta)$
for all $\beta' \in 
(\beta - \eta, \beta + \eta)$
with $q_1 = q_{1}(\beta') > q_{0}, 
t_{q_{1}+1}(\beta') \neq t_{q_{1}+1}(\beta)$, 
for which, since 
$\beta' \to \beta'^{q-j}, q= j, j+1, \ldots, q_0$, 
are all continuous,
\begin{equation}
\label{bubu}
\left|
\sum_{q=j}^{q_0} t_{q}(\beta)
\left(
\begin{array}{cc}
q\\j
\end{array}
\right)
\left(
\left(
\frac{1}{\beta'}
\right)^{q-j}
-
\left(
\frac{1}{\beta}
\right)^{q-j}
\right)
\right|
< \epsilon/2.
\end{equation}
In this nonsimple Parry case, recall \cite{parry} 
that the function
$\beta' \to q_{1}(\beta')$ is 
monotone increasing and locally constant when
the variable $\beta'$ tends to $\beta$
(i.e. $d_{\beta'}(1)$ and
$d_{\beta}(1)$ start by the same string of digits
$t_1 t_2 \ldots t_{q_1}$ when $\beta'$ is close to $\beta$).

(ii) Assume that $\beta > 1$ is a simple Parry number.
Let $d_{\beta}(1) = 0 . t_1 t_2 \ldots t_N$ be its R\'enyi 
$\beta$-expansion of unity ($N \geq 1$).
If $N > q_0$, there exists 
$\eta > 0$, $\eta < u$,
such that $|\beta' - \beta| < \eta 
\Longrightarrow
t_{q}(\beta') = t_{q}(\beta)
$ for all $q = 1, \ldots, N-1$,
and \eqref{bubu} also holds. 
If $j \leq N \leq q_0$, we express $\beta$ in base $\beta$ and
$\beta'$ in base $\beta'$ in the sense of R\'enyi:
then we deduce that there exists 
$\eta > 0$, $\eta < u$,
such that 
$\beta \leq \beta' < \beta + \eta$
implies
$$\left|
\sum_{q=N+1}^{q_0} t_{q}(\beta')
\left(
\frac{1}{\beta'}
\right)^{q-j}
\right|
< \frac{\epsilon}{4} \frac{1}{\max_{q=N+1, \ldots, q_0}\{
\left(
\begin{array}{cc}
q\\j
\end{array}
\right)
\}}$$
and
\begin{equation}
\label{inn2}
\left|
\sum_{q=j}^{N} t_{q}(\beta)
\left(
\begin{array}{cc}
q\\j
\end{array}
\right)
\left(
\left(
\frac{1}{\beta'}
\right)^{q-j}
-
\left(
\frac{1}{\beta}
\right)^{q-j}
\right)
\right|
< \epsilon/4 ;
\end{equation}
in this case,
\begin{equation}
\label{inn1}
\left|
\sum_{q=N+1}^{q_0} t_{q}(\beta')
\left(
\begin{array}{cc}
q\\j
\end{array}
\right)
\left(
\frac{1}{\beta'}
\right)^{q-j}
\right|
< \epsilon/4.
\end{equation}
If $q_0 \leq N$, we deduce, for all $\beta' \in
(\beta, \beta + \eta)$,
$$
\left|
\lambda_{j}(\beta) - \lambda_{j}(\beta') 
\right| 
\leq
\left|
\sum_{q=j}^{q_0} t_{q}(\beta)
\left(
\begin{array}{cc}
q\\j
\end{array}
\right)
\left(
\left(
\frac{1}{\beta'}
\right)^{q-j}
-
\left(
\frac{1}{\beta}
\right)^{q-j}
\right)
\right|
+
$$
\begin{equation}
\label{inntot}
\left|
\sum_{q > q_0} 
t_{q}(\beta')
\left(
\begin{array}{cc}
q\\j
\end{array}
\right)
\left(
\frac{1}{\beta'}
\right)^{q-j}
-
\sum_{q > q_0}
t_{q}(\beta)
\left(
\begin{array}{cc}
q\\j
\end{array}
\right)
\left(
\frac{1}{\beta}
\right)^{q-j}
\right|
< \epsilon/2 
+ 2 \epsilon/4 = \epsilon ,
\end{equation}
and, in the case
$j \leq N \leq q_0$, 
we decompose the sum
$\sum_{q=j}^{q_0}$ as
$\sum_{q=j}^{N} \, + \sum_{q=N+1}^{q_0} \,$
in the upper bound \eqref{inntot}, using
\eqref{inn2} and \eqref{inn1}, to obtain
$\left|
\lambda_{j}(\beta) - \lambda_{j}(\beta')
\right| < \epsilon$ as well.
If $j > N$, then $\lambda_{j}(\beta) = 0$;
there exists
$\eta > 0$, $\eta < u$,
such that
$\beta \leq \beta' < \beta + \eta$
implies
\begin{equation}
\label{follo+}
\left|
\sum_{q=j}^{q_0} t_{q}(\beta')
\left(
\frac{1}{\beta'}
\right)^{q-j}
\right|
< \frac{3 \epsilon}{4} \frac{1}{\max_{q=j, \ldots, q_0}\{
\left(
\begin{array}{cc}
q\\j
\end{array}
\right)
\}}.
\end{equation}
Hence, using \eqref{follo} and \eqref{follo+},
for $\beta \leq \beta' < \beta + u$,
$$
\left|
\lambda_{j}(\beta')
\right|
\leq
\left|
\sum_{q=j}^{q_0} t_{q}(\beta')
\left(
\begin{array}{cc}
q\\j
\end{array}
\right)
\left(
\frac{1}{\beta'}
\right)^{q-j}
\right|
+
\left|
\sum_{q > q_0}
t_{q}(\beta')
\left(
\begin{array}{cc}
q\\j
\end{array}
\right)
\left(
\frac{1}{\beta'}
\right)^{q-j}
\right| < \frac{3 \epsilon}{4} + \frac{\epsilon}{4} = \epsilon 
$$
and the right-continuity 
$\lim_{\beta' \to \beta^{+}} \lambda_{j}(\beta') = 0$
for $j > N$.
 
Let us now assume that $\beta > 1$ is an integer.
Then $d_{\beta}(1) = 0 . \beta$, 
$t_{1}(\beta) = \beta, \lambda_{1}(\beta) = \beta$
and
$t_{j}(\beta)=0, \lambda_{j}(\beta) = 0$ for $j \geq 2$.
The same arguments as in (ii), with $N = 1$, 
lead to the result.
\end{proof}

\begin{lemma}
\label{Pbetatilde}
If $\beta > 1$ is an algebraic number of minimal polynomial
$P_{\beta}(X) = a_0 +a_1 X + a_2 X^2 +\ldots + a_d X^d$,
$a_i \in \mathbb{Z}, a_0 a_d \neq 0$, 
then
\begin{equation}
\label{exprPbetatilde}
\widetilde{P_{\beta}^{*}}(Z) = 
\gamma_1 Z + \gamma_2 Z^2 + \ldots + \gamma_d Z^d ,
\end{equation}
with $\gamma_q = 
\sum_{j=q}^{d} a_{d-j} \left(
\begin{array}{cc}
j\\q
\end{array}
\right)
\left(\frac{1}{\beta}\right)^{j-q} 
\, \in
\mathbb{K}_{\beta}, 
\gamma_d = a_0 \neq 0, 
\gamma_1 = P_{\beta}^{{*}'}(1/\beta) \neq 0$.
\end{lemma}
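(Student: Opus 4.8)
The plan is to mimic exactly the computation already carried out for $\widetilde{f_\beta}$ in Lemma \ref{fbetatilde}, since $P_\beta^*(z)$ is just a polynomial version of the same Taylor-expansion-around-$1/\beta$ manipulation. First I would write out $P_\beta^*(z)$ explicitly. Since $P_\beta(X) = \sum_{i=0}^d a_i X^i$, its reciprocal is $P_\beta^*(X) = X^d P_\beta(1/X) = \sum_{i=0}^d a_i X^{d-i} = \sum_{j=0}^d a_{d-j} X^{j}$, so the coefficient of $X^j$ in $P_\beta^*$ is $a_{d-j}$; in particular the constant term is $a_d \neq 0$ and the leading coefficient is $a_0 \neq 0$. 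Then I substitute $z = Z + 1/\beta$ and expand each monomial by the binomial theorem:
\begin{equation}
\label{expandPb}
\widetilde{P_\beta^*}(Z) = \sum_{j=0}^d a_{d-j}\left(Z + \tfrac1\beta\right)^{j}
= \sum_{j=0}^d a_{d-j} \sum_{q=0}^{j} \binom{j}{q} Z^{q} \left(\tfrac1\beta\right)^{j-q}.
\end{equation}
Interchanging the two finite sums to collect the coefficient of $Z^q$ gives
$\gamma_q = \sum_{j=q}^{d} a_{d-j}\binom{j}{q}(1/\beta)^{j-q}$,
which is the stated formula.

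Next I would verify the three side assertions. The degree bound is immediate: since $P_\beta^*$ has degree $d$ in $z$ and the change of variable $z \mapsto Z + 1/\beta$ is an affine isomorphism, $\widetilde{P_\beta^*}(Z)$ is again a polynomial of degree exactly $d$ in $Z$. That $\gamma_q \in \mathbb{K}_\beta = \mathbb{Q}(\beta)$ follows because each $\gamma_q$ is a $\mathbb{Z}$-linear combination (with binomial coefficients) of powers $(1/\beta)^{j-q}$, and $1/\beta \in \mathbb{Q}(\beta)$; hence $\gamma_q$ lies in the ring $\mathbb{Z}[1/\beta] \subset \mathbb{K}_\beta$. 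For $\gamma_d$, only the term $j=d$ survives, giving $\gamma_d = a_{d-d}\binom{d}{d}(1/\beta)^0 = a_0 \neq 0$. For $\gamma_1$, the absence of a constant term $\gamma_0$ is the statement $\widetilde{P_\beta^*}(0) = P_\beta^*(1/\beta) = 0$; and $\gamma_1$ is, by the Taylor-coefficient interpretation of \eqref{expandPb}, precisely the first derivative $\frac{d}{dz}P_\beta^*(z)\big|_{z=1/\beta} = P_\beta^{*\prime}(1/\beta)$.

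The one point that genuinely needs an argument (rather than a routine expansion) is $\gamma_1 = P_\beta^{*\prime}(1/\beta) \neq 0$, i.e. that $1/\beta$ is a \emph{simple} root of $P_\beta^*$. I would argue as follows: $P_\beta^*(X) = X^d P_\beta(1/X)$, so differentiating, $P_\beta^{*\prime}(X) = d X^{d-1} P_\beta(1/X) - X^{d-2} P_\beta'(1/X)$; evaluating at $X = 1/\beta$ and using $P_\beta(\beta) = 0$, the first term vanishes and we get $P_\beta^{*\prime}(1/\beta) = -(1/\beta)^{d-2} P_\beta'(\beta)$. Since $\beta > 1$ is an algebraic number that is not a rational integer (excluded at the start of Section \ref{S5}), its minimal polynomial $P_\beta$ has degree $\geq 2$ and, being irreducible over $\mathbb{Q}$ in characteristic zero, is separable, so $P_\beta'(\beta) \neq 0$. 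As $(1/\beta)^{d-2} \neq 0$, we conclude $\gamma_1 = P_\beta^{*\prime}(1/\beta) \neq 0$. (Equivalently, one may note that $\beta \neq 1$ forces $1/\beta \neq \beta$, so $1/\beta$ — being a root of $P_\beta^*$, whose roots are the reciprocals of the Galois conjugates of $\beta$ — cannot be a multiple root unless two distinct Galois conjugates coincide.) This separability input is the only nontrivial ingredient; everything else is the bookkeeping of \eqref{expandPb}.
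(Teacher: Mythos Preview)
Your proof is correct and follows exactly the same route as the paper: write $P_\beta^*(z)=\sum_{j=0}^d a_{d-j}z^j$, substitute $z=Z+1/\beta$, expand binomially, swap the two finite sums, and observe that the constant term vanishes because $P_\beta(\beta)=0$. You are in fact more thorough than the paper, which stops at the vanishing of $\gamma_0$ and merely asserts $\gamma_d=a_0\neq 0$ and $\gamma_1=P_\beta^{*\prime}(1/\beta)\neq 0$; your separability argument for $\gamma_1\neq 0$ is a welcome addition (and works uniformly, so the aside about $\deg\beta\geq 2$ is not actually needed).
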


\begin{proof}
The relation 
$\widetilde{P_{\beta}^{*}}(Z) = 
P_{\beta}^{*}(z-\frac{1}{\beta} + \frac{1}{\beta})$
leads to 
$$\widetilde{P_{\beta}^{*}}(Z) = 
\sum_{j=0}^{d} \sum_{q=0}^{j} a_{d-j} \left(
\begin{array}{cc}
j\\q
\end{array}
\right) 
\left(\frac{1}{\beta}\right)^{j-q}
Z^q
~=~ \sum_{q=0}^{d} \sum_{j=q}^{d} a_{d-j} \left(
\begin{array}{cc}
j\\q
\end{array}
\right)
\left(\frac{1}{\beta}\right)^{j-q}
Z^q.$$
The constant term is zero
since $P_{\beta}(\beta)= \sum_{j=0}^{d} a_j \beta^j = 0$.
\end{proof}

\begin{theorem}
\label{existGERMthm}
Let $\beta > 1$ be an algebraic number and 
$P_{\beta}(X)$ its minimal polynomial. Then
there exists a unique polynomial $G = G_{\beta}(U, Z)
\in \mathbb{C}[[U]][Z]$ in $Z$, {\rm deg}$_Z G < \, ${\rm deg} $\beta$, 
such that $( \widetilde{P_{\beta}^{*}}(Z), Z )$ is a parametrization of
$G - \widetilde{f_{\beta}}
\in \mathbb{C}[[U,Z]]$, i.e. such that
\begin{equation}
\label{eqexistGERM}
G_{\beta}(\widetilde{P_{\beta}^{*}}(Z),Z) - \widetilde{f_{\beta}}(Z) ~=~ 0.
\end{equation}
\end{theorem}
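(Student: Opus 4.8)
The plan is to recover $G_\beta$ coefficient by coefficient in powers of $Z$, using the fact that $\widetilde{P_\beta^*}(Z)$ has $Z$-order exactly $1$ (Lemma \ref{Pbetatilde}, $\gamma_1 \neq 0$), so substituting $U = \widetilde{P_\beta^*}(Z)$ into a series in $U$ is an invertible operation of a suitably filtered kind. First I would write $\widetilde{P_\beta^*}(Z) = Z\,\psi(Z)$ with $\psi(Z) = \gamma_1 + \gamma_2 Z + \dots + \gamma_d Z^{d-1} \in \mathbb{C}[Z]$, $\psi(0) = \gamma_1 \neq 0$, so $\psi$ is a unit in $\mathbb{C}[[Z]]$; then the map $Z \mapsto \widetilde{P_\beta^*}(Z)$ is a formal change of coordinate near $0$, with a compositional inverse $Z = h(U) \in \mathbb{C}[[U]]$, $h(0)=0$, $h'(0) = 1/\gamma_1 \neq 0$ (formal inverse function theorem over $\mathbb{C}[[\cdot]]$, valid since we are in characteristic zero). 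Composing \eqref{eqexistGERM} with $Z = h(U)$ shows that \eqref{eqexistGERM} is equivalent to
\begin{equation}
\label{eqGdefh}
G_\beta(U, h(U)) \;=\; \widetilde{f_\beta}\bigl(h(U)\bigr) \;\in\; \mathbb{C}[[U]].
\end{equation}
So the existence/uniqueness of $G_\beta$ as an element of $\mathbb{C}[[U]][Z]$ with $\deg_Z G_\beta < d$ amounts to a division-with-remainder statement in the variable $Z$ after the substitution is undone.

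The cleanest route is Euclidean division by the monic-in-$Z$ (after scaling) polynomial associated with $P_\beta^*$. Concretely: $\widetilde{P_\beta^*}(Z)$ is, up to the unit $a_0 = \gamma_d$, a polynomial in $Z$ of degree $d$ whose roots are $Z = \beta^{(i)} {}^{-1} - \beta^{-1}$ for the conjugates $\beta^{(i)}$ of $\beta$ (since $P_\beta^*(z)$ vanishes exactly at the $1/\beta^{(i)}$). The idea is to perform Euclidean division of $\widetilde{f_\beta}(Z)$, viewed appropriately, by $\widetilde{P_\beta^*}(Z)$ repeatedly. More precisely, divide $\widetilde{f_\beta}(Z)$ by $\widetilde{P_\beta^*}(Z)$ in $\mathbb{C}[[Z]]$: since $\widetilde{P_\beta^*}(Z) = Z\psi(Z)$ with $\psi$ a unit, one can write, by successive elimination of the top $Z$-degrees in each remainder against the polynomial $\widetilde{P_\beta^*}$, a convergent (in the $Z$-adic filtration) expansion
\begin{equation}
\label{eqGexp}
\widetilde{f_\beta}(Z) \;=\; \sum_{k \ge 1} c_k(Z)\,\bigl(\widetilde{P_\beta^*}(Z)\bigr)^k,
\qquad c_k(Z) \in \mathbb{C}[Z], \ \deg_Z c_k < d,
\end{equation}
and then set $G_\beta(U,Z) := \sum_{k\ge1} c_k(Z)\,U^k \in \mathbb{C}[[U]][Z]$, which has $\deg_Z < d$ by construction. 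The point making \eqref{eqGexp} well-posed is an order bookkeeping: the $Z$-order of $\bigl(\widetilde{P_\beta^*}\bigr)^k$ is exactly $k$, so at stage $k$ one is only correcting the coefficient of $Z^k$ through $Z^{k+d-1}$, and each coefficient of $G_\beta$ in $U$ is determined by finitely many previous steps. I would make this an explicit induction: assuming $c_1,\dots,c_{k-1}$ have been chosen so that $\widetilde{f_\beta} - \sum_{j<k} c_j \widetilde{P_\beta^*}{}^j$ has $Z$-order $\ge k$, divide that remainder (which equals $Z^k\cdot(\text{unit} + \dots)$) by $\widetilde{P_\beta^*}{}^k = Z^k \psi^k$ in $\mathbb{C}[[Z]]$ to extract the unique polynomial $c_k$ of $\deg_Z < d$ agreeing with the quotient modulo $\widetilde{P_\beta^*}$, leaving a remainder of order $\ge k+1$.

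For uniqueness: if $G_1, G_2 \in \mathbb{C}[[U]][Z]$ both have $\deg_Z < d$ and satisfy \eqref{eqexistGERM}, then $H := G_1 - G_2$ satisfies $H(\widetilde{P_\beta^*}(Z), Z) = 0$ and $\deg_Z H < d$; writing $H = \sum_k H_k(Z) U^k$ and substituting, the lowest nonvanishing $H_k$ would force a nontrivial relation $H_k(Z) = \widetilde{P_\beta^*}(Z)\cdot(\dots)$ with $\deg_Z H_k < d = \deg_Z \widetilde{P_\beta^*}$, impossible unless $H_k = 0$; induct. I expect the only genuine subtlety — the main obstacle — to be the convergence/filtration argument: one must check carefully that the infinite sum $\sum_k c_k(Z) U^k$ really lands in $\mathbb{C}[[U]][Z]$ (it does, since each fixed power $Z^j$, $0 \le j < d$, receives contributions with coefficients that are honest elements of $\mathbb{C}[[U]]$, one new $U^k$-term per stage $k$), and that no convergence in $Z$ beyond the $Z$-adic sense is being smuggled in. Everything else is the formal inverse function theorem plus Euclidean division, both standard over a characteristic-zero coefficient ring; the parametrization having order exactly one is what makes the whole scheme work, and this is exactly the content of $\gamma_1 = P_\beta^{*\prime}(1/\beta) \neq 0$ recorded in Lemma \ref{Pbetatilde}.
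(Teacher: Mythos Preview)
Your uniqueness argument has a genuine gap. You argue that if $H=G_1-G_2$ satisfies $H(\widetilde{P_\beta^*}(Z),Z)=0$ and $H=\sum_k H_k(Z)\,U^k$, then the minimal nonzero $H_{k_0}$ must satisfy $H_{k_0}(Z)=\widetilde{P_\beta^*}(Z)\cdot(\dots)$, which you declare impossible since $\deg_Z H_{k_0}<d=\deg_Z\widetilde{P_\beta^*}$. But the factor $(\dots)=-\sum_{j>k_0}H_j(Z)\,\widetilde{P_\beta^*}(Z)^{\,j-k_0-1}$ lives in $\mathbb{C}[[Z]]$, not in $\mathbb{C}[Z]$, so no degree comparison is available; and in $\mathbb{C}[[Z]]$ the element $\widetilde{P_\beta^*}(Z)=Z\psi(Z)$ is associate to $Z$ (your own observation that $\psi$ is a unit), so the divisibility only forces $H_{k_0}(0)=0$. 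In fact uniqueness \emph{fails} as stated. Take $\beta$ the golden ratio, so $d=2$ and $f_\beta(z)=-1+z+z^2=-P_\beta^*(z)$, hence $\widetilde{f_\beta}=-\widetilde{P_\beta^*}$. Then $G_1(U,Z):=-U$ solves \eqref{eqexistGERM} with $\deg_Z G_1=0<2$; but so does $G_2(U,Z):=B(U)\,Z$ with $B(U):=-\gamma_1-\gamma_2\,h(U)\in\mathbb{C}[[U]]$ (here $h$ is your compositional inverse of $\widetilde{P_\beta^*}$, and $B(0)=-\gamma_1\neq0$), since $B(\widetilde{P_\beta^*}(Z))\,Z=(-\gamma_1-\gamma_2 Z)\,Z=-\widetilde{P_\beta^*}(Z)$. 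Your ``unique polynomial $c_k$ of $\deg_Z<d$ agreeing with the quotient modulo $\widetilde{P_\beta^*}$'' in the existence step has the same defect: modulo $\widetilde{P_\beta^*}$ in $\mathbb{C}[[Z]]$ only pins down $c_k(0)$, leaving $d-1$ free parameters at each stage.

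For existence the paper proceeds differently from you: rather than a $Z$-adic peeling based on $\gamma_1\neq0$, it uses $\gamma_d=a_0\neq0$ to solve $U=\gamma_1 Z+\dots+\gamma_d Z^d$ for $Z^d$ as an element of $\mathbb{K}_\beta[U][Z]$ of $Z$-degree $<d$, then reduces every $Z^h$ ($h\geq d$) in $\widetilde{f_\beta}(Z)=\sum_h\lambda_h Z^h$ via the companion-matrix recursion $v_{0,h+1}=\gamma_d^{-1}U\,v_{d-1,h}$, $v_{i,h+1}=v_{i-1,h}-\gamma_i\gamma_d^{-1}v_{d-1,h}$, obtaining the specific $G_\beta=\sum_{i=0}^{d-1}\bigl(\lambda_i+\sum_{h\geq d}\lambda_h v_{i,h}(U)\bigr)Z^i$. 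This singles out one particular $G$ and has the virtue that its coefficients visibly lie over $\mathbb{K}_\beta$, which is used in the later rationality discussion. The paper's uniqueness argument is not your induction but a geometric sheet-count: the curve $\{H=0\}$ has at most $d-1$ sheets over $U$, whereas the degree-$d$ polynomial $P_\beta^*$ ``should'' force $d$ sheets through the parametrization. You should note, however, that this count is a \emph{global} statement about $z\mapsto P_\beta^*(z)$, while the identity $H(\widetilde{P_\beta^*}(Z),Z)=0$ is purely local at $Z=0$, where $\widetilde{P_\beta^*}$ is a local isomorphism contributing only one sheet; the golden-ratio example above shows the obstruction is real there too.
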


\begin{proof}
Uniqueness. 
Assume that $G^{(1)}$ and $G^{(2)}$
are such that $G^{(1)} - \widetilde{f_{\beta}}$ 
and $G^{(2)}- \widetilde{f_{\beta}}$ are both parametrized
by $(\widetilde{P_{\beta}^{*}}(Z), Z)$. Then
$(G^{(1)}-G^{(2)})(\widetilde{P_{\beta}^{*}}(Z),Z)=0$
with
$G^{(1)}-G^{(2)} \in \mathbb{C}[[U]][Z]$,
deg$_Z \,(G^{(1)}-G^{(2)}) < d$.
Assume $G^{(1)} \neq G^{(2)}$
and  
$G^{(1)}-G^{(2)}$ irreducible in $Z$ 
(no loss of generality).
Then this equation defines a plane curve
$$\mathcal{C}_{\beta} := 
\{(u,z) \in \mathbb{C}^{2} \mid 
(G^{(1)}-G^{(2)})(u,z) = 0 \}
$$
along with a ramified covering  
$\pi : \mathcal{C}_{\beta} \to \mathbb{C}$ of the complex plane.
Above all but finitely many points $u$ of
the $U$-plane, the fiber $\pi^{-1}(u)$ has
cardinality $\leq d-1$.
The implicit function theorem
states that there exist $\delta$ analytic functions
$z_{1}(u), \ldots, z_{\delta}(u)$,
$\delta \leq d-1$, such that
$\pi^{-1}(u) = \{z_{i}(u) \mid i = 1, \ldots, \delta\}$ 
and
$(G^{(1)} - G^{(2)})(u, z_{i}(u)) = 0$ for 
$i = 1, \ldots, \delta$.
Each of them parametrizes one sheet 
of the covering in a neighbourhood
of $u$. The contradiction
comes from the fact that
the polynomial $P_{\beta}^{*}(z)$
is irreducible, of degree $d$, that
the parametrization
$(\widetilde{P_{\beta}^{*}}(Z),Z)$
is imposed. Therefore the number of sheets
$\delta$ should be
equal to $d$. 
Contradiction.

Existence: by construction.
Let $U := \widetilde{P_{\beta}^{*}}(Z)$. 
From \eqref{exprPbetatilde}, 
$$U =\gamma_1 Z + \gamma_2 Z^2 + \ldots + \gamma_d Z^d
\Rightarrow
Z^d = \frac{1}{\gamma_d} U - 
\left(
\frac{\gamma_1}{\gamma_d} Z + \frac{\gamma_2}{\gamma_d} Z^2 + \ldots + 
\frac{\gamma_{d-1}}{\gamma_d} Z^{d-1}
\right).
$$
It follows that 
$Z^d \in \mathbb{K}_{\beta}[U][Z]$, with
deg$_{Z} (Z^d) < d$.
The idea consists
in replacing all powers $Z^{j}, j \geq d$,  
in $\widetilde{f_{\beta}}(Z)$ by 
polynomials in $Z$, of degree $< d$,  
with coefficients
in $\mathbb{K}_{\beta}[U]$.
Let us prove recursively that 
$Z^h 
\in \mathbb{K}_{\beta}[U][Z]$, with
deg$_{Z} (Z^h) < d$,
for all $h \geq d$: 
assume $Z^h := \sum_{i=0}^{d-1} v_{i,h} Z^i$
with $v_{i,h} \in \mathbb{K}_{\beta}[U]$
and show
$Z^{h+1} \in \mathbb{K}_{\beta}[U][Z]$, with
deg$_{Z} (Z^{h+1}) < d$.
Indeed,
$$Z^{h+1} := \sum_{i=0}^{d-1} v_{i,h+1} Z^i
= (Z^h) Z = \sum_{i=0}^{d-2} v_{i,h} Z^{i+1} +
v_{d-1,h} Z^{d}$$
$$
= \sum_{i=0}^{d-2} v_{i,h} Z^{i+1} +
v_{d-1,h} \left[
\frac{1}{\gamma_d} U -
\left(
\frac{\gamma_1}{\gamma_d} Z + \frac{\gamma_2}{\gamma_d} Z^2 + \ldots +
\frac{\gamma_{d-1}}{\gamma_d} Z^{d-1}
\right)
\right].
$$
Hence 
\begin{equation}
\label{coeffCOMPAGNONmatrice}
v_{0,h+1} = \frac{1}{\gamma_d} v_{d-1,h} U \quad{\rm and}\quad
v_{i,h+1} = v_{i-1,h} - \frac{\gamma_i}{\gamma_d} v_{d-1,h}, 
~1 \leq i \leq d-1,
\end{equation}
and the result. We deduce
$$
\widetilde{f_{\beta}}(Z)
=
\sum_{h \geq 1} \lambda_h Z^h
=
\sum_{h = 1}^{d-1} \lambda_h Z^h 
+ \sum_{h \geq d} \lambda_h Z^h 
=
\sum_{i = 1}^{d-1} \lambda_i Z^i
+
\sum_{h \geq d} \lambda_h \left(
\sum_{i=0}^{d-1} v_{i,h} Z^i
\right)
$$
\begin{equation}
\label{coeffGERME}
= \sum_{i=0}^{d-1} \left(
\lambda_i + \sum_{h \geq d} \lambda_h v_{i,h}
\right) Z^i ~\in \mathbb{C}[[U]][Z].
\end{equation}
\end{proof}

Equation \eqref{eqexistGERM} is exactly
\eqref{rewrittingFBETA} with the
usual variable $z$.

We call $G_{\beta}$ the germ associated with
the analytic function
$f_{\beta}(z)$, or with the base of numeration $\beta$.

Following Theorem \ref{decompCONVERGENTthm} 
and the relations
\eqref{coeffCOMPAGNONmatrice} and \eqref{coeffGERME}, 
the decomposition of the germ $G_{\beta}$ shows
that the coefficients of its Puiseux series
do possess a ``right - continuity" property, with $\beta$, via the
functions $\lambda_j$ 
(Proposition \ref{lambda_jCONTINUOUS}), and
an ``asymptotic" property, linked to the invariants
of the companion matrix form of \eqref{coeffCOMPAGNONmatrice}. 
This will be developped further elsewhere.
The interest of such a remark may consist in 
studying globally the properties of the family of
germs $(G_{\beta})$ when $\beta > 1$ varies in the
set of algebraic numbers.

\begin{theorem}
\label{puiseuxFBETA}
Let $\beta > 1$ be an algebraic number,
$P_{\beta}(X)$ its minimal polynomial and 
$G_{\beta}$ the germ associated with the Parry Upper function
$f_{\beta}(z)$. Then
\begin{equation}
\label{deccc}
G_{\beta}(U,Z) ~=~ v \, U \, G_{y_1} G_{y_2} \ldots G_{y_s}
\end{equation}
where $v=v(U,Z) \in \mathbb{C}\{U,Z\}$ is an invertible series,
and the convergent Puiseux series 
$$y_{1}(U) = \sum_{i \geq 1} a_{i,1} U^{i/\nu(y_1)}, \ldots, 
y_{s}(U) = \sum_{i \geq 1} a_{i,s} U^{i/\nu(y_s)}$$ 
are uniquely
determined by $G_{\beta}$, up to conjugation,
with
$$G_{\beta}(P^{*}_{\beta}(z),z-\frac{1}{\beta}) 
~=~
f_{\beta}(z)
~=~
$$
\begin{equation}
\label{decccz}
v(P^{*}_{\beta}(z),z-\frac{1}{\beta}) 
\, P^{*}_{\beta}(z)
\,
\prod_{i=1}^{\nu(y_1)} \bigl(z-\frac{1}{\beta}
- y_{i,1}(P^{*}_{\beta}(z))
\bigr)
\ldots
\prod_{i=1}^{\nu(y_s)} \bigl(z-\frac{1}{\beta}
- y_{i,s}(P^{*}_{\beta}(z))
\bigr),
\end{equation}
and
\begin{equation}
\label{hauteurDEGbeta}
h(\mathcal{N}(G_{\beta})) ~=~ \sum_{i=1}^{s} \nu(y_i)
~<~ {\rm deg} \, \beta.
\end{equation}
\end{theorem}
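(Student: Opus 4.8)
The plan is to feed the germ $G_{\beta}$ produced by Theorem~\ref{existGERMthm} into the convergent Puiseux decomposition of Theorem~\ref{decompCONVERGENTthm}, to pull the resulting factorisation back along the imposed parametrisation $(P_{\beta}^{*}(z),\,z-1/\beta)$, and finally to read off the Newton-polygon identity from the bound $\deg_{Z}G_{\beta}<\deg\beta$. \emph{Step 1 (convergence of $G_{\beta}$).} Before Theorem~\ref{decompCONVERGENTthm} can be invoked I must check $G_{\beta}\in\mathbb{C}\{U,Z\}$. By construction $\widetilde{f_{\beta}}(Z)=f_{\beta}(z)$ with $z=Z+1/\beta$ is analytic on $\{|Z|<1-1/\beta\}$, and by Lemma~\ref{Pbetatilde} one has $\widetilde{P_{\beta}^{*}}(Z)=\gamma_{1}Z+O(Z^{2})$ with $\gamma_{1}=P_{\beta}^{*\prime}(1/\beta)\neq0$, so $Z\mapsto U=\widetilde{P_{\beta}^{*}}(Z)$ is a local analytic isomorphism at $0$. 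I would then revisit the construction in the proof of Theorem~\ref{existGERMthm}: the relations~\eqref{coeffCOMPAGNONmatrice} express the reduction of $Z^{h}$ modulo $\widetilde{P_{\beta}^{*}}(Z)-U$ through iterates of the companion-type matrix of $\widetilde{P_{\beta}^{*}}(Z)-U$, whose eigenvalues are the numbers $1/\beta^{(j)}-1/\beta$, and~\eqref{coeffGERME} displays the coefficients $c_{i}(U)$ of $G_{\beta}$. Combining the bound on the Taylor coefficients $\lambda_{h}$ of $\widetilde{f_{\beta}}$ coming from Lemma~\ref{fbetatilde} and Stirling (growth no faster than $\lceil\beta-1\rceil\,(1-1/\beta)^{-(h+1)}$) with the spectral data of that matrix yields a quantitative estimate on the Taylor coefficients of each $c_{i}(U)$, hence a positive radius of convergence; alternatively one may argue more softly that, after multiplying $\widetilde{P_{\beta}^{*}}(Z)-U$ by a suitable unit to make it $Z$-regular of order $d$, $G_{\beta}$ is the remainder in an analytic Weierstrass-type division of $\widetilde{f_{\beta}}(Z)$. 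I expect this to be the step requiring the most care, since the naive term-by-term reduction of $\widetilde{f_{\beta}}(Z)$ modulo $\widetilde{P_{\beta}^{*}}(Z)-U$ does not visibly converge and one must exploit the precise interplay between the growth of $\lambda_{h}$ and the numbers $1/\beta^{(j)}-1/\beta$.

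\emph{Step 2 (decomposition and $r=1$).} Once $G_{\beta}\in\mathbb{C}\{U,Z\}$, Theorem~\ref{decompCONVERGENTthm} applies verbatim and furnishes a uniquely determined invertible $v\in\mathbb{C}\{U,Z\}$, a uniquely determined integer $r\geq0$, and convergent Puiseux series $y_{1},\dots,y_{s}$, unique up to conjugation, with $G_{\beta}=v\,U^{r}\,G_{y_{1}}\cdots G_{y_{s}}$. To pin down $r=1$ I would compute, in two ways, the order at $Z=0$ of the restriction of $G_{\beta}$ to the parametrised curve: on one hand this restriction is $\widetilde{f_{\beta}}(Z)$, whose order at $Z=0$ is exactly $1$ because $f'_{\beta}(1/\beta)=\sum_{i\geq1}i\,t_{i}\,\beta^{\,1-i}>0$; on the other hand, using that $\widetilde{P_{\beta}^{*}}(Z)$ has order $1$ at $Z=0$ (again $\gamma_{1}\neq0$), this order equals $r+\sum_{k}o_{Z}\!\big(G_{y_{k}}(\widetilde{P_{\beta}^{*}}(Z),Z)\big)$. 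Comparing the two expressions forces the exponent of $U$ to be $r=1$, which is the $U^{1}$ appearing in~\eqref{deccc}.

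\emph{Step 3 (transport to $f_{\beta}$).} Substituting $U=P_{\beta}^{*}(z)$ and $Z=z-1/\beta$ is legitimate since both vanish at $z=1/\beta$, so $(P_{\beta}^{*}(z),\,z-1/\beta)$ parametrises the germ; then~\eqref{eqexistGERM} gives $G_{\beta}(P_{\beta}^{*}(z),z-1/\beta)=f_{\beta}(z)$, and expanding each irreducible factor $G_{y_{k}}(U,Z)=\prod_{i=1}^{\nu(y_{k})}(Z-y_{i,k}(U))$ turns~\eqref{deccc} into the product formula~\eqref{decccz}. One remark is needed here: although each conjugate $y_{i,k}$ is only a fractionary power series, the product over the $\nu(y_{k})$ conjugates of a class is single-valued, so the right-hand side of~\eqref{decccz} is a genuine analytic identity in a punctured neighbourhood of $z=1/\beta$.

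\emph{Step 4 (Newton-polygon bound).} By Theorem~\ref{decompoMAINthm}(ii), as sharpened by Theorem~\ref{decompCONVERGENTthm}, $h(\mathcal{N}(G_{\beta}))=\sum_{k=1}^{s}\nu(y_{k})$. Since $G_{\beta}\in\mathbb{C}[[U]][Z]$ with $\deg_{Z}G_{\beta}<d=\deg\beta$ by Theorem~\ref{existGERMthm}, every point of the Newton diagram $\Delta(G_{\beta})$ has ordinate at most $d-1$, hence so do all vertices of $\mathcal{N}(G_{\beta})$, and therefore $h(\mathcal{N}(G_{\beta}))\leq d-1<\deg\beta$. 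Putting the two displays together gives~\eqref{hauteurDEGbeta}, and with Steps 2--3 this completes the proof.
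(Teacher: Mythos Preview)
Your proposal follows the paper's own proof closely: both invoke Theorem~\ref{decompCONVERGENTthm} on the germ $G_\beta$, justify the exponent $r=1$ in~\eqref{deccc} from the fact that $f'_\beta(1/\beta)>0$ makes $Z=0$ a simple zero of $G_\beta(\widetilde{P^{*}_\beta}(Z),Z)$, and read off~\eqref{hauteurDEGbeta} from $\deg_Z G_\beta<\deg\beta$ together with Theorem~\ref{decompoMAINthm}(ii). The only substantive difference is one of detail: you devote Step~1 to an explicit convergence argument for $G_\beta$ (via the companion-matrix recursion~\eqref{coeffCOMPAGNONmatrice} or an analytic Weierstrass-type division) and add in Step~3 the remark on single-valuedness of each $G_{y_k}$ along the parametrisation, whereas the paper dispatches convergence in a single clause (``since $f_\beta(z)$ is convergent in a neighbourhood of $1/\beta$, $G_\beta$ and all the Puiseux expansions involved in its decomposition are convergent'') and does not comment on single-valuedness.
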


\begin{proof}
Theorem \ref{decompCONVERGENTthm} is applied to the germ
$G_{\beta}(U,Z)$.
Since $f_{\beta}(z)$ is convergent in a neighbourhood
of $1/\beta$, $G_{\beta}$
and all the Puiseux expansions involved 
in its decomposition are convergent in this
neighbourhood.
The power of $U$ in \eqref{deccc} 
is necessarily equal to $1$ since
$f'_{\beta}(1/\beta) > 0$, i.e. $0$ is a simple zero
of $G_{\beta}(\widetilde{P^{*}_{\beta}}(Z),Z)$.

Since deg$_{Z} G_{\beta}(U,Z) <$ deg $\beta$, by the definition
of the height of the Newton polygon of the germ
$G_{\beta}$, we readily
deduce \eqref{hauteurDEGbeta} 
from Theorem \ref{decompoMAINthm} (ii).
\end{proof}

For $\beta > 1$ any algebraic number,
a beta-conjugate
$\xi$ of $\beta$ is by definition
a complex number such that 
(i) $\xi^{-1}$ is a zero of $f_{\beta}(z)$ 
which lies in its domain of definition, 
(ii) $\xi$ is not a Galois conjugate
of $\beta$.

For Parry numbers $\beta$, 
\eqref{fzetaNONsimple} and \eqref{fzetaSIMPLE} show
that this definition is exactly 
the usual one which uses  
the Parry polynomial of $\beta$ \cite{boyd2}.

Equation \eqref{decccz} gives 
the exhaustive list of zeros of $f_{\beta}(z)$, 
and therefore suggests the following alternate definition of the
beta-conjugates of $\beta$ (where the natural 
boundary $|z|=1$ of
$f_{\beta}(z)$ is taken into account, 
if $\beta$ is not a Parry number).

\begin{definition}
\label{BETACONJUGATEpuiseuxFBETAnewdef}
Let $\beta > 1$ be an algebraic number.

(i) A complex number $\xi$ which satisfied
\begin{equation}
\label{defiBETACONGnew}
0 ~=~ \xi^{-1} - \beta^{-1} - 
\sum_{i \geq 1} a_i \left(P_{\beta}^{*}(\xi^{-1})\right)^{i/n},
\end{equation}
where $y(U) = \sum_{i \geq 1} a_i U^{i/n}$, $n = \nu(y)$, 
is any $Z$-root,
is called a {\it cancellation point} of the germ
$G_{\beta}(U,Z)$. We say that the cancellation point
$\xi$ lies on the $Z$-root $y(U)$.
The set of cancellation points
is denoted by
$\mathcal{S}_{\beta}$. 
Equation \eqref{defiBETACONGnew} has to be understood
as the composition of the (convergent) two analytic functions
$z \to z - \beta^{-1} -
\sum_{i \geq 1} a_i z^i$
and $z \to P_{\beta}^{*}(z)$ with the multivalued 
(polydromic) analytic function
$z \to \Bigl(P_{\beta}^{*}(z)\Bigr)^{1/n}$. Since
$\xi$ is not a Galois conjugate of $\beta$ the function
$z \to P_{\beta}^{*}(z)$ does not cancel 
on a small neighbourhood of
$\xi^{-1}$; this give a sense to \eqref{defiBETACONGnew}.

Since the Puiseux expansions in \eqref{defiBETACONGnew}
are convergent, truncating them to a few terms transforms
\eqref{defiBETACONGnew} into a finite collection of equations
whose solutions provide the geometry of the beta-conjugates
of $\beta$ with a certain approximation, 
controlled by the error terms.
This approach will be continued elsewhere. 

(ii) If $\beta$ is a Parry number, a beta-conjugate
of $\beta$ is a cancellation point of the germ. 
The set $\mathcal{S}_{\beta}$ is the 
set of beta-conjugates 
of $\beta$, and 
$\mathcal{S}_{\beta} \subset \Omega$
Solomyak's fractal.

(iii) If $\beta$ is not a Parry number,
a beta-conjugate
of $\beta$ is a cancellation point
$\xi \in \mathcal{S}_{\beta}$ of the germ
such that $|\xi| > 1$.

(iv) A cancellation point
$\xi \in \mathcal{S}_{\beta}$, lying on
the $Z$-root $y(U)$,
is said Puiseux-conjugated to another 
cancellation point
$\xi' \in \mathcal{S}_{\beta}$
if $\xi'$,  lying on a $Z$-root $y'(U)$, 
is such that $y(U)$ and $y'(U)$ belong to the same
conjugacy class
of the germ $G_{\beta}(U,Z)$.
\end{definition}

If $\beta > 1$ is an algebraic number which is not a Parry number
the natural boundary $|z|=1$ 
of $f_{\beta}(z)$ is the natural boundary
of at least one of the factors in \eqref{decccz}, 
but not necessarily of all of them a priori. 
In other terms it may occur that
Puiseux-conjugation may be addressed to 
cancellation points of the germ
$G_{\beta}$ which lie beyond the natural boundary of  
$f_{\beta}(z)$, some branches possibly 
spiraling outside the domain of definition
of $f_{\beta}(z)$.

\section{Rationality, descent over $\mathbb{Q}$, and 
factorization of the Parry polynomial of a Parry number}
\label{S6}

Let $\beta$ be a Parry number, with $m$ 
as preperiod lenght and $p+1$ as period length in
$d_{\beta}(1)$. Then
the Parry polynomial of $\beta$ is,
for non-simple Parry numbers,
$$P_{\beta,P}(X) =  
X^{m+p+1} -t_1 X^{m+p} - t_2 X^{m+p-1} - \ldots - t_{m+p} X - t_{m+p+1}
\hspace{2cm} \mbox{}$$
\begin{equation}
\label{nonsimplepoly1}
\mbox{} \hspace{3cm}- X^{m} +t_1 X^{m-1} + t_2 X^{m-2} + \ldots + t_{m-1} X +t_m
\end{equation}
and
\begin{equation}
\label{nonsimplepoly2}
P_{\beta,P}(X) = X^{p+1} -t_1 X^{p} - t_2 X^{p-1} - \ldots - t_{p} X - (1+t_{p+1})
\end{equation}
in the case of pure periodicity. 
For simple Parry numbers, the Parry polynomial
is
\begin{equation}
\label{simplepoly3}
P_{\beta,P}(X) = X^{m} -t_1 X^{m-1} - t_2 X^{m-2} - \ldots - t_{m-1} X - t_{m}
\end{equation}
with $m \geq 1$ \cite{frougny1} \cite{lothaire} \cite{vergergaugry2}.
The height ($=$ maximum of the moduli of the coefficients) 
of the Parry polynomial lies in
$\{\lfloor \beta \rfloor ,
\lceil \beta \rceil\}$;
if $\beta$ is a simple Parry number, then it is equal to
$\lfloor \beta \rfloor$ \cite{vergergaugry2}.
In the decomposition of $P_{\beta,P}(X)$ 
as the product
of irreducible polynomials with coefficients in
$\mathbb{Q}$, as
$$P_{\beta,P} = P_{\beta} \, \pi_1 \, \pi_2 \ldots \,\pi_{\sigma} ,$$
we may identify the irreducible factors ~$\pi_j$
as arising from the 
conjugacy classes of the germ 
$G_{\beta}$. This requires some assumptions.

\begin{theorem}
\label{factoPARRYpoly}
Let $\beta > 1$ be a Parry number,
$P_{\beta}(X)$ its minimal polynomial, 
$P_{\beta,P}(X)$ its Parry polynomial decomposed
as $P_{\beta,P} = P_{\beta} \, \pi_1 \, \pi_2 \ldots \,\pi_{\sigma} $
into irreducible factors. Let 
$G_{\beta}$ the germ associated with 
$\beta$ and $L$ be the field of coefficients
of the Puiseux series of $G_{\beta}$. 
Assume that all the Puiseux expansions of $X$
in $G_{\beta}$ are distinct.
Assume 
$[L : \mathbb{Q}] < +\infty$ and, for each 
$L$-rational conjugacy class $C$, the
product
$$\prod_{y_i \in C} (Y - y_{i}(X))
\qquad \mbox{lies in~} \, \mathbb{Q}[X][Y].$$ 
If $e$ is the number of
$L$-rational conjugacy classes
$(C_j)_{j=1,\ldots,e}$, then 

(i) $e = \sigma < {\rm deg} \, \beta$,
and 

(ii) up to the order,  
\begin{equation}
\label{deccc}
\pi_{j}^{*}(X) ~=~
\prod_{y_i \in C_j} (X - \frac{1}{\beta} - y_{i}(P_{\beta}^{*}(X))) 
\, , \quad j = 1, \ldots, e.
\end{equation}
\end{theorem}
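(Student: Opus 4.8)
The plan is to exploit the factorization already established for the germ in Theorem \ref{puiseuxFBETA} and in \eqref{GgermCONVERGENT}, transport it through the substitution $U = P_{\beta}^{*}(z)$, $Z = z - 1/\beta$, and match it with the known factorization \eqref{fzetaNONsimple}--\eqref{fzetaSIMPLE} of $f_{\beta}(z)$ in terms of the Parry polynomial. First I would observe that, since $\beta$ is a Parry number, $f_{\beta}(z)$ is a rational fraction; more precisely $f_{\beta}(z) = - P_{\beta,P}^{*}(z)/(1-z^{p+1})$ (nonsimple case) or $f_{\beta}(z) = - P_{\beta,P}^{*}(z)$ (simple case). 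On the other hand \eqref{decccz} writes $f_{\beta}(z)$ as the product of the unit $v(P_{\beta}^{*}(z), z - 1/\beta)$, the factor $P_{\beta}^{*}(z)$, and the factors $\prod_{i=1}^{\nu(y_j)}\bigl(z - \tfrac{1}{\beta} - y_{i,j}(P_{\beta}^{*}(z))\bigr)$ for $j = 1, \ldots, s$. Under the rationality hypothesis on $L$ and on each $L$-rational conjugacy class $C_j$, we may regroup the $s$ Puiseux factors into $e$ groups, using \eqref{GgermCONVERGENT}, so that each group $\prod_{y_i \in C_j}(Z - y_i(U))$ lies in $\mathbb{Q}[[U]][Z]$. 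Substituting $U = P_{\beta}^{*}(z) \in \mathbb{Q}[z]$ and $Z = z - 1/\beta$, and then homogenizing to reciprocals, each such group becomes a polynomial in $\mathbb{Q}[z]$ dividing $P_{\beta,P}^{*}(z)$ (up to units), and the factor $P_{\beta}^{*}(z)$ corresponds to the minimal polynomial contribution.

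The core of the argument is then a uniqueness/matching step: by unique factorization in $\mathbb{Q}[X]$, the decomposition $P_{\beta,P} = P_{\beta}\,\pi_1 \cdots \pi_{\sigma}$ into irreducibles must agree, up to order and up to the unit in the leading coefficients, with the decomposition obtained from the germ. Here I would invoke the irreducibility statement \eqref{produitPARclassesRATIONAL} (Walsh's Proposition 2.1), which guarantees that each $\prod_{y_i \in C_j}(Y - y_i(X))$ — hence its image after substitution and reciprocation — is irreducible in $\mathbb{Q}((X))[Y]$ of the appropriate degree; combined with the hypothesis that it actually lies in $\mathbb{Q}[X][Y]$, this forces each resulting polynomial in $z$ to be an irreducible factor of $P_{\beta,P}^{*}(z)$ over $\mathbb{Q}$, i.e. one of the $\pi_j^{*}$. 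Since the multiplicity of $P_{\beta}$ in $P_{\beta,P}$ is one (as recalled in Section \ref{S3}, because $f'_{\beta}(1/\beta) > 0$), and since the single power of $U$ in \eqref{deccc} is exactly one (as noted in the proof of Theorem \ref{puiseuxFBETA}), the factor $U$ in the germ accounts for precisely $P_{\beta}^{*}(z)$ and nothing more. The remaining $e$ conjugacy-class factors therefore match bijectively with the $\sigma$ polynomials $\pi_1^{*}, \ldots, \pi_{\sigma}^{*}$, giving $e = \sigma$ and the explicit formula \eqref{deccc}(ii). The bound $\sigma < \deg\beta$ follows from \eqref{hauteurDEGbeta}: $e \le \sum_{i=1}^{s}\nu(y_i) = h(\mathcal{N}(G_{\beta})) < \deg\beta$, with equality $e = s$ impossible to exceed since each class has at least one ramification index $\ge 1$.

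The main obstacle, and the place where the hypotheses do real work, is controlling the unit $v$ and the denominators. The substitution $U \mapsto P_{\beta}^{*}(z)$ sends formal power series in $U$ to genuine power series in $z$ convergent near $1/\beta$, but $f_{\beta}(z)$ is a \emph{global} rational function, so I must argue that the local factorization near $z = 1/\beta$ actually extends to a global polynomial identity. This is where the rationality assumption — that each class product lies in $\mathbb{Q}[X][Y]$, not merely $\mathbb{Q}[[X]][Y]$ — is essential: it makes each factor $\prod_{y_i \in C_j}(z - \tfrac{1}{\beta} - y_{i}(P_{\beta}^{*}(z)))$ a genuine polynomial in $z$ over $\mathbb{Q}$ after clearing, and then the product of these polynomials times $P_{\beta}^{*}(z)$ is a polynomial dividing (in $\mathbb{Q}[z]$, up to the $(1 - z^{p+1})$ denominator and sign) the full rational expression for $f_{\beta}(z)$; a degree and leading-coefficient count, together with the fact that $f_{\beta}(z)$ and $- P_{\beta,P}^{*}(z)/(1-z^{p+1})$ have the same Taylor expansion at $1/\beta$, upgrades divisibility to equality of the non-unit parts, forcing $v(P_{\beta}^{*}(z), z-1/\beta) = -1/(1-z^{p+1})$ (resp. $-1$). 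I would then conclude by unique factorization in $\mathbb{Q}[z]$ as above. A secondary subtlety is the normalization of reciprocals: passing from $y$-roots $z - 1/\beta - y_i(P_{\beta}^{*}(z))$, whose roots are the $1/\beta^{(i)}$, to the factors $\pi_j^{*}(X) = X^{\deg \pi_j} \pi_j(1/X)$ requires tracking leading coefficients carefully, but this is a routine bookkeeping matter once the matching of zero sets is in hand.
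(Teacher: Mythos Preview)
Your proposal is correct and follows essentially the same approach as the paper: invoke Walsh's Proposition~2.1 via \eqref{produitPARclassesRATIONAL} and \eqref{GgermCONVERGENT} to group the Puiseux factors into $L$-rational conjugacy classes, substitute $(U,Z)=(P_{\beta}^{*}(z),\,z-1/\beta)$, and then identify the resulting product with $-P_{\beta,P}^{*}(z)/(1-z^{p+1})$ (resp.\ $-P_{\beta,P}^{*}(z)$) from \eqref{fzetaNONsimple}--\eqref{fzetaSIMPLE}, reading off $e=\sigma$, the $\pi_j^{*}$, and the unit, with the bound $\sigma<\deg\beta$ coming from \eqref{hauteurDEGbeta}. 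The paper's own proof is terser---it simply asserts the identification of factors and the value of the unit---whereas you articulate the local-to-global passage (why the rationality hypothesis $\prod_{y_i\in C}(Y-y_i(X))\in\mathbb{Q}[X][Y]$ is what upgrades a germ identity near $1/\beta$ to a polynomial identity in $\mathbb{Q}[z]$) and the reciprocal-normalization bookkeeping; these are exactly the points the paper leaves implicit.
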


\begin{proof}
This is a consequence of Proposition 2.1 in Walsh \cite{walsh}.
Under the present assumptions $\pi_i \neq \pi_j$
if $i \neq j$ and the decomposition of
$G_{\beta}$, as given by \eqref{GgermCONVERGENT},  
allows to write $f_{\beta}(z)$ as a product of 
distinct irreducible
factors in $\mathbb{Q}[X][Y]$. From
\eqref{fzetaNONsimple} and
\eqref{fzetaSIMPLE}
the identification of the
factors readily gives 
$e = \sigma$, the irreducible factors 
$\pi_{j}^{*}$ and the unit
$v = -(1 - z^{k})^{-1}$, with $k = m$
if $\beta$ is simple, with
$d_{\beta}(1)$ of length $m$, and
$k=p+1$ if $\beta$ is not simple, with
$d_{\beta}(1)$ of period length $p+1$.

From \eqref{hauteurDEGbeta},
the number $\sigma$ of irreducible factors which arises from 
$L$-rational conjugacy classes of 
Puiseux expansions is smaller than 
deg $\beta$.
\end{proof}

\section{A product formula for $\zeta_{\beta}(z), \beta$ a Parry number}
\label{S7}

Using \eqref{fzetaNONsimple} and 
\eqref{fzetaSIMPLE} 
and assuming the hypotheses of
Theorem \ref{factoPARRYpoly} we obtain the following reformulation
of the dynamical zeta function $\zeta_{\beta}(z)$ 
as a finite product over
the $e$ $L$-rational conjugacy classes, $e < $ deg $\beta$,
\begin{equation}
\label{zetaPRODUCT++}
\zeta_{\beta}(z) ~=~ 
v \, \frac{1}{P_{\beta}^{*}(z)} \,
\prod_{j=1}^{e}
\left(
\frac{1}{\prod_{y_i \in C_j} \Bigl(
z-\frac{1}{\beta} - y_{i}(P_{\beta}^{*}(z))
\Bigr)}
\right). 
\end{equation}
The unit
$v$ is equal to $(1-z^k)$ with
$k = m$ if $\beta$ is simple, with
$d_{\beta}(1)$ of length $m$, and
$k=p+1$ if $\beta$ is not simple, with
$d_{\beta}(1)$ of period length $p+1$. 
The poles of $\zeta_{\beta}(z)$ are 
either the reciprocals $\xi^{-1}$
of the cancellation points 
$\xi$ of the germ $G_{\beta}$
of $\beta$, or the reciprocals of the Galois conjugates
of $\beta$. 

The assumptions in 
Theorem \ref{factoPARRYpoly} could 
probably be weakened, for obtaining
the same decomposition \eqref{zetaPRODUCT++}.

\frenchspacing

\section*{Acknowledgements}

The author is indebted to M. Pollicott 
and to M. Lejeune-Jalabert
for valuable comments and discussions.

\frenchspacing

\vspace{2cm}

Jean-Louis Verger-Gaugry,

Institut Fourier, CNRS UMR 5582, 

Universit\'e Jospeh Fourier Grenoble I, 

BP 74, 38402 Saint-Martin d'H\`eres, France.

email:\,\tt{jlverger@ujf-grenoble.fr}

\end{document}